\documentclass[reqno]{amsart}
\usepackage[margin=3cm]{geometry}
\RequirePackage{etex}
\usepackage[utf8]{inputenc}
\usepackage[T1]{fontenc}
\usepackage[english]{babel}
\usepackage[square,sort,comma,numbers]{natbib}

\usepackage{scrtime} % give time by \thistime
\usepackage{amsfonts}           % Math and stuff
\usepackage{amscd}              % Math and stuff
\usepackage{amsmath}            % Math and stuff
\usepackage{amssymb,amsthm}     % Math and stuff
\usepackage{xparse}             % Package for commands with optional arguments
\usepackage{xcolor}             % Package for several kinds of color tints, shades, tones, and mixes of arbitrary colors
\usepackage{dsfont}             % Package for theorems, lemmas, corollaries, proofs and definitions
\usepackage{bm, bbm}            % Package for bold letters in Math mode
\usepackage{comment}            % Package to include comments
\usepackage{multicol, bigints}
\usepackage[many]{tcolorbox}    % Package to make nice boxes
\usepackage{enumitem}           % Package for \ref printing the whole label I tell it
       % Package for nice indexing
\usepackage{float,graphicx,mathtools}              % Package for adding figures
\usepackage{array}              % Array package
\usepackage{tikz}               % Allows including graphs
\usetikzlibrary{arrows,positioning} % Allows for fancy arrow drawings
\usetikzlibrary{patterns}       % Allows for patterns
\typeout{Fonts}
\usepackage{mathrsfs}

\newcommand{\1}{\mathbbm 1}
\newcommand\C{\mathbb C}

\newcommand\N{\mathbb N}

\newcommand\R{\mathbb R}

\newcommand\cA{{\mathcal A}}
\newcommand\cB{{\mathcal B}}

\newcommand\cD{{\mathcal D}}
\newcommand\cF{{\mathcal F}}
\newcommand\cH{{\mathcal H}}
\newcommand\cK{{\mathcal K}}
\newcommand\cL{{\mathcal L}}
\newcommand\cM{{\mathcal M}}

\newcommand\cP{{\mathcal P}}
\newcommand\cQ{{\mathcal Q}}
\newcommand\cR{{\mathcal R}}
\newcommand\cS{{\mathcal S}}
\newcommand\cT{{\mathcal T}}

\newcommand\cW{{\mathcal W}}

\newcommand\fA{{\mathfrak A}}
\newcommand\fp{{\mathfrak p}}

\providecommand{\norm}[1]{{\lVert#1\rVert}}

\providecommand{\abs}[1]{\lvert#1\rvert}

%Fernando
%\newcommand\cPfin{{\mathcal P}_\text{fin}}

%----------------------------------------------------------------------------------------
% ENVIRONMENTS
%----------------------------------------------------------------------------------------
% Environments for theorems, propositinos, corollaries, lemma, assumptions, problems and conjectures
\providecommand{\norm}[1]{{\lVert#1\rVert}}
\newtheorem{theorem}{Theorem}[section]{\bf}{\it}
\newtheorem{proposition}[theorem]{Proposition}{\bf}{\it}
\newtheorem{corollary}[theorem]{Corollary}{\bf}{\it}
{\bf}{\it}
{\bf}{\it}
{\bf}{\it}
{\bf}{\it}
% For the intro
\newcounter{theoremintrocnt}
\setcounter{theoremintrocnt}{1}
{\bf}{\it}

% Environment for definitions, remarks and examples
\newcommand{\quadtext}[1]{\quad\text{#1}\quad}
\theoremstyle{definition}
\newtheorem{definition}[theorem]{Definition}%{\it}{\rm}
\newtheorem{remark}[theorem]{Remark}%{\it}{\rm}
\newtheorem{example}[theorem]{Example}%{\it}{\rm}
%{\it}{\rm}
%----------------------------------------------------------------------------------
% COMMANDS
%----------------------------------------------------------------------------------
\title[Finite dimensional approximation for linear operators]{
Finite dimensional approximations for Hilbert space operators and applications in Quantum Mechanics}
\author{Eva A. Gallardo-Guti\'errez}
\address{Departamento de An\'alisis Matem\'atico y Matem\'atica Aplicada,
Universidad Complutense de Madrid, 28040 Madrid, Spain and
  Instituto de Ciencias Matem\'aticas ICMAT (CSIC-UAM-UC3M-UCM), Madrid}
\email{eva.gallardo@mat.ucm.es}

\author{Fernando Lled\'o} %
\address{Department of Mathematics, University Carlos III de Madrid,
  Avda. de la Universidad 30, 28911. Legan\'es (Madrid), Spain and
  Instituto de Ciencias Matem\'aticas ICMAT (CSIC-UAM-UC3M-UCM), Madrid}
\email{flledo@math.uc3m.es}

\author{Laura S\'aenz} %
\address{Department of Mathematics, University Carlos III de Madrid,
  Avda. de la Universidad 30, 28911. Legan\'es (Madrid), Spain and
  Instituto de Ciencias Matem\'aticas ICMAT (CSIC-UAM-UC3M-UCM), Madrid}
\email{lsaenz@math.uc3m.es}

\thanks{This work is partially supported by the Severo Ochoa Centers of Excellence program through the project
CEX2023-001347-S funded by MCIN/AEI/10.13039/501100011033.
First author is also  partially supported by Plan Nacional I+D grant no. PID2022-137294NB-I00, Spain. The second and third authors are  also partially 
supported by the grant PID2023-146758NB-I00 funded by MICIU/AEI/10.13039/501100011033.
The last named author also acknowledges support from the Ministry of Science, Innovation and Universities (grant No. FPU23/03378) and is a postgraduate fellow at the Residencia de Estudiantes (2024–2025), funded by the Ministry of Science and Innovation.}

\keywords{quasidiagonal/F\o lner approximations for Hilbert space operators; unbounded operators in Hilbert space; applications to quantum mechanics} 
\subjclass[2010]{47A58,47A66,47L60,47C10}

\begin{document}

\begin{abstract}
In this work, we develop a unified framework for quasidiagonal and F\o lner-type approximations of linear operators on Hilbert spaces. These approximations (originally formulated for bounded operators and operator algebras) involve sequences of non-zero finite rank orthogonal projections that asymptotically commute with the operator -- either in norm (quasidiagonal) or in mean 
(F\o lner). Such structures guarantee spectral approximation results in terms of their finite sections. We extend this theory to unbounded, densely defined closable operators, establishing a generalization of Halmos' classical result: every closable quasidiagonal operator is a compact perturbation of a closable block-diagonal operator on the same domain. Likewise, we introduce \textit{sparse F\o lner sequences} and establish an interplay between quasidiagonal approximations and the existence of sparse F\o lner sequences. The theoretical developments are illustrated with explicit examples using different types of weighted shifts and applied to quantum mechanical models, including a detailed treatment of the Weyl algebra and its Schr\"odinger representation.

\end{abstract}

\date{\today, \thistime,  \emph{File:} \texttt{\jobname.tex}}

\maketitle
\tableofcontents

%%%%%%%%%%%%%%%%%%%%%%%%%%%%%%%%%%%%%%%%%%%%%%%%%%%%
\section{Introduction}

Approximation is one of the most important techniques in single operator theory and in operator algebras, allowing us to analyse complex objects in terms of limits of simpler ones (see, for example, \cite{Herrero82,bBrown08,boettcher:00}). Central to this line of inquiry are the concepts of 
\emph{quasidiagonal} and \emph{F\o lner approximations}. A bounded linear operator on a separable infinite dimensional Hilbert space
$T\in\cB(\cH)$ is quasidiagonal if there exists an increasing sequence of finite rank orthogonal projections 
$\cP=\{P_n\}_{n\in\N}$ converging strongly to the identity $\1$ and commuting asymptotically with $T$ in norm, i.e.
\[
\lim_{n\to\infty}\|\,TP_n  - P_n T\,\|=0.
\]
(In the case of a separable C*-algebra $\cA\subset\cB(\cH)$, one requires the preceding condition for any element of the algebra.) The corresponding sequence of projection subspaces are called a filtration of the Hilbert space $\cH$. Quasidiagonality was introduced by Halmos in the seventies in relation to the invariant subspace problem (see \cite{Halmos70,Voiculescu93}).
F\o lner approximations in terms of a sequence of finite rank orthogonal projections $\cR=\{R_n\}_{n\in\N}$
generalizes quasidiagonality in two ways. First, one does not require exhaustivity to the sequence of  projections, and 
second, the asymptotic condition is weaker in the sense that the limit is taken relative to the growth of the dimension of the underlying projection subspaces. Concretely, $\cR$ is a F\o lner sequence for $T$ if
\[
\lim_{n\to\infty} \frac{\|T R_n-R_n T\|_1}{\|R_n\|_1} = 0\;,
\]
where $\|\cdot\|_1$ denotes the trace-class norm. The preceding condition can be equivalently stated with the Hilbert-Schmidt norm $\|\cdot\|_2$.
F\o lner-type approximations were introduced by Connes in his celebrated article 
\cite[Section~V]{Connes76}, particularly in his classification of injective von Neumann factors. Both the motivation for this approximation and its name stem from considering group algebras of amenable groups, i.e., groups admitting a F\o lner sequence of finite subsets in the group.
(See Section~\ref{sec:bounded} for precise statements as well as \cite{LledoYakubovich13,ALLY14} for additional results.) 

The {\em existence} of quasidiagonalizing or F\o lner sequences for an operator $T$ or a C*-algebra $\cA$ has several structural consequences. The most prominent one in single operator theory is due to Halmos and states that any quasidiagonal operator is a compact perturbation of a block-diagonal operator. As an immediate consequence, any quasidiagonal Fredholm operator necessarily has index $0$. This result extends a classic area of research in the analysis of self-adjoint and normal operators due to Weyl, von Neumann, Berg  and  Sikonia \cite{Weyl09,Neumann35,berg:71,Sikonia70}. In a similar vein, a separable C*-algebra $\cA\subset\cB(\cH)$ has a F\o lner sequence if and only if it admits an amenable trace $\tau$, i.e., a tracial state on $\cA$ that extends to a state $\psi$ on $\cB(\cH)$ that has $\cA$ in its centralizer. Recent results also provide characterizations of $\mathrm{C}^*$-algebras admitting amenable traces in terms of a matrix approximation given by a sequence of contractive completely positive maps which asymptotically commute in a normalized Hilbert-Schmidt norm, which makes contact with Voiculescu's modern C*-algebraic approach to quasidiagonality (see \cite{bBrown08,Brown06,Bedos95,AL14} for details).

Moreover, the \emph{explicit construction} of these sequences presents its own challenge. It allows numerical approximation of spectral objects of $T$ (spectral measures, pseudospectrum, spectrum, etc.) in terms of the corresponding quantities of its finite sections given by $P_nTP_n$. In fact,
if $T$ is quasidiagonal with respect to a filtration given by $\cP=\{P_n\}_{n\in\N}$, then there is in general convergence of pseudospectra, and in the normal case even better convergence results follow (see \cite{Brown06b,Arveson94} for details and more results). Quasidiagonal approximations and the finite section method have been considered for unbounded operators in \cite{Hansen08}.

In the more general context of F\o lner approximations there exist also interesting spectral approximations generalizing classical results 
obtained for Toeplitz operators by Szeg\"o in \cite{Szego20}.
Let $\cA$ be a C*-algebra admitting an amenable trace $\tau$ or, equivalently, a F\o lner sequence $\{R_n\}_{n\in\N}$. For $T=T^*\in\cA$ denote by $\mu_T$ the spectral measure associated with the trace
$\tau$ and consider the corresponding (self-adjoint)
compressions $T_n:=P_n T P_n$. Denote by $\mu_T^n$ the
probability measure on $\R$ supported on the spectrum of $T_n$,
i.e.,
\[
 \mu_T^n(\Delta):=\frac{N_T^n(\Delta)}{\|R_n\|_1}\;,\quad \Delta\subset\R\quad \mathrm{Borel}\;,
\]
where $N_T^n(\Delta)$ is the number of eigenvalues of $T_n$
(multiplicities counted) contained in $\Delta$. We say that 
$\left\{
\{P_n\}_{n\in\N} \,,\,\tau\right\}$ is a {\it Szeg\"o pair} for $\cA$ if for any self-adjoint element $T\in\cA$ we have
$\mu_T^n\mathop{\longrightarrow}\limits^{w^*} \mu_T$,
i.e., for any continuous function $f\colon \R\to\R$
\begin{equation}\label{eq:szego}
 \lim_{n\to\infty}
  \frac{1}{d_n}
  \Big( f(\lambda_{1,n})+\dots+f(\lambda_{d_n,n})\Big) =\int f(\lambda) \, d\mu_T(\lambda)
  \;,\quad f\in C_0(\R)  \;,
\end{equation}
where $d_n:=\|R_n\|_1$ and $\{\lambda_{1,n},\dots,\lambda_{d_n,n}\}$ are the eigenvalues
of $T_n$ repeated according to their multiplicity.
We refer to \cite{Bedos97,bHagen01,AL14} for additional results and to \cite{pLledo11} for explicit 
F\o lner sequences in the context of tensor products and crossed products.

While almost all of the existing literature has focused on \emph{bounded} operators, many physically relevant operators such as the position operator $q$ or momentum operator $p$ satisfying the canonical commutation relation (in its simplest form and taking  the reduced Plank's constant $\hbar=1$)
 \begin{equation}\label{eq:ccr}
 qp-pq=i\1\;
\end{equation}
or standard quantum Hamiltonians are necessarily unbounded  (cf. \cite{Wintner47,Wielandt49}).
In quantum mechanics, \emph{Galerkin-type} approximations have been proposed as a method to approximate dynamics by restricting the problem to finite dimensional subspaces of increasing size, yet a systematic extension of F\o lner and quasidiagonal approximations to this setting remained underdeveloped.

In this article we address both the existence and construction of quasidiagonal and F\o lner sequences and their mutual relation for bounded and unbounded linear operators.
In the setting of unbounded operators we introduce in Section~\ref{sec:unbounded}
natural seminorms and require that the operator domain is stable under finite rank projections. In this framework, we prove in Theorem~\ref{Theorem:QD=BD+K} a generalization of Halmos' theorem: any closable quasidiagonal operator is a compact perturbation of a block diagonal operator (on the same domain). Moreover, 
we construct a quasidiagonalizing sequence of projections associated with any (closable) normal operator based on Berg's method (see \cite{berg:71}) and a dyadic partition of the spectrum (see Subsections~\ref{subsec:berg} and \ref{subsec:berg-unbounded}). Since the definition of F\o lner sequence does not assume exhaustivity of the finite rank projections, we establish a natural interplay between quasidiagonality and F\o lner sequences: any quasidiagonal operator (bounded or unbounded) allows a \emph{sparse F\o lner sequence} (see Subsection~\ref{sec:sparse} for a precise definition and Corollary~\ref{cor:usparse}). We will present along the way many concrete examples of operators on $\ell_2(\N)$, mainly in terms of unbounded weighted shifts. These examples will show, among other results, that for unbounded operators, a F\o lner sequence for the trace-class norm need not be a F\o lner sequence for the Hilbert-Schmidt norm. In these examples the propagation of the operator and the propagation of its commutator with the projection will play an important role.

Finally, we apply previous results to several mathematical structures associated to the canonical commutation relation (CCR): we show that the Weyl algebra, that is the complex algebra generated by 
$q$ and $p$ satisfies an algebraic version of amenability introduced by Gromov in 
\cite{Gromov99}. This result confirms, at a purely algebraic level, that the canonical commutation relation generates amenable structures in the corresponding categories. In fact, it is shown in \cite[Theorems~3.1 and 4.1]{LM22} that every faithful and essential representation of Weyl C*-algebra and the resolvent C*-algebras (which encode a version of the commutation relation compatible with bounded operators) have F\o lner sequences of projections (see Definition~\ref{def:foelner} below and 
\cite{Manuceau68,BG08}).
We also apply our previous results, in particular the clases of examples mentioned before, to the Schr\"odinger representation of the Weyl algebra (necessarily) in terms of unbounded operators.

\textbf{Notation:} We denote by $\cH$ a separable infinite dimensional complex Hilbert space.
The C*-algebras of bounded linear operators (resp. 
compact operators) on $\cH$ is denoted by $\cB(\cH)$ (resp. $\cK(\cH)$). Moreover, the set $\cL(\cH)$ corresponds to the set of linear (possibly unbounded) and densely defined operators on $\cH$.
For any $T\in\cK(\cH)$, we write $\|T\|_a$, $a=1,2,\dots$, for its norm in the Schatten-von Neumann class.

%%%%%%%%%%%%%%%%%%%%%%%%%%%%%%%%%%%%%%%%%%%%%%%%%

\section{Quasidiagonality and F\o lner sequences for bounded operators}\label{sec:bounded}
We begin by mentioning the main definitions of quasidiagonality and F\o lner sequences in the context of 
bounded operators on a Hilbert space. We also state some standard results needed later. For proofs and additional results we refer to \cite{Halmos70,Brown06,Brown06b,ALLY14} (see also references therein).

To motivate the notions introduced below recall that a bounded operator $T\in\cB(\cH)$ is called block-diagonal if there exists an exhausting
sequence $\{P_n\}_{n\in\N}$ of finite rank orthogonal projections reducing $T$, i.e., satisfying $[T,P_n]=0$ for all $n\in\N$. The block structure is induced by the mutually orthogonal sequence of projections $Q_n:= P_n-P_{n-1}$ (putting $P_0:=0$) which satisfy $\sum_{n\in\N} Q_n=\1$ (convergence in the strong operator topology) and decompose the Hilbert space and the operator into 
\[
 \cH=\mathop{\oplus}\limits_{n\in\N} Q_n\cH\quadtext{and} 
 T=\sum_{n\in\N} Q_n T=\sum_{n\in\N} Q_n TQ_n\;.
\]
This notion has two natural generalizations:

\begin{definition}\label{def:qd}
Consider a filtration of the Hilbert space $\cH$, that is, a sequence of non-zero finite dimensional subspaces $\cH_1\subset\cH_2\subset\cdots$ such that $\overline{\cup_{n\in\N}\cH_n}=\cH$, and let $P_n$ be the finite rank orthogonal projection onto $\cH_n$. For $T\in \cB(\cH)$, we say that the sequence
$\cP:=\{P_n\}_{n\in \N}$ is a {\em quasidiagonalizing sequence for $T$} if
\begin{equation}\label{eq:qd}
\lim_{n\to\infty} {\|T P_n-P_n T\|} = 0.
\end{equation}
An operator $T\in\cB(\cH)$ is called {\em quasidiagonal} if it admits a quasidiagonalizing sequence.
Similarly, a separable set of operators $\cT\subset\cB(\cH)$ (in particular, a concrete C*-algebra)
is {\em quasidiagonal} if there exists a sequence $\cP$ that is quasidiagonalizing for every element of $\cT$.
\end{definition}

\begin{definition}\label{def:foelner}
Let $T\in \cB(\cH)$ and $\cR:=\{R_n\}_{n\in \N}$ be a sequence of non-zero finite rank orthogonal projections. The sequence $\cR$ is called a {\em F{\o}lner sequence for $T$} if
\begin{equation}\label{eq:foelner}
\lim_{n\to\infty} \frac{\|T R_n-R_n T\|_1}{\|R_n\|_1} = 0\;.
\end{equation}
If the previous condition holds for all elements in a separable set of operators $\cT\subset\cB(\cH)$,  we say $\cR$ is a F\o lner sequence for $\cT$.

(Notation: When, in addition, $\cR:=\{R_n\}_{n\in\N}$ is exhausting, we will often use the symbol $\cP:=\{P_n\}_{n\in\N}$ as in Definition~\ref{def:qd}.)
\end{definition}

We summarize next some standard facts around these notions that will be revisited when we extend them for unbounded operators. %For brevity, we omit explicit mention of boundedness in the following remark.

\begin{remark}\label{rem:QD/Folner_bd}
\begin{itemize}
 \item[a)] Every bounded block-diagonal operator (e.g., by the Peter-Weyl theorem, any unitary representation of a compact group) is clearly quasidiagonal. Moreover, \emph{any} sequence of projections associated with a filtration is quasidiagonalizing  for any compact operator. As a consequence, quasidiagonalizing sequences are stable under compact perturbations due to the linearity of the commutator. In particular, bounded normal operators, and therefore self-adjoint and unitary ones, are quasidiagonal (see \cite[Theorem~1]{berg:71} and \cite[Theorem~2]{Sikonia70} or  \cite{bDavidson96,bChavan21}).

 \item[b)] The notion of a F\o lner sequence for an operator can be understood as a quasidiagonality condition relative to the growth of the dimension of the underlying spaces. In fact, any quasidiagonalizing sequence is also a F\o lner sequence, showing that quasidiagonality is a stronger notion. The unilateral shift $S$ on $\ell_2(\N)$ is a prototype that has a F\o lner sequence
 (take the orthogonal projection $P_n$ onto the subspace generated by the first $n$ canonical basis elements in $\ell_2(\N)$) but can not be quasidiagonal since it has Fredholm index $-1$. Finally, in the context of bounded operators, in Eq.~(\ref{eq:foelner}) one can take, equivalently, the Hilbert-Schmidt norm $\|\cdot\|_2$ instead of the trace-class norm
 (see, e.g., \cite[Lemma~1]{Bedos97}).
 
\item[c)] Operator algebras come naturally into this analysis. For example, if $T\in\cB(\cH)$ is a quasidiagonal operator, then the C*-algebra  $C^*(T)$ generated by $T$ (i.e., the closure of all polynomials in $T$, $T^*$ and $\1$) is also quasidiagonal. Indeed, by Halmos' theorem, $T=B+K$, where $B$ is block-diagonal and $K$ is compact. Hence, if $B$ is block-diagonal with respect to $\{P_n\}_{n\in\N}$, then $C^*(T)\subset \{P_n\}_{n\in\N}'+\cK(\cH)$, so the sequence $\{P_n\}_{n\in\N}$ is quasidiagonalizing for every element in $C^*(T)$. (Here $\{P_n\}_{n\in\N}'$ denotes the operators in $\cB(\cH)$ commuting with the set of projections.) Furthermore,
given a sequence of projections $\cP$ associated with a filtration of $\cH$, the set of operators for which $\cP$ is a quasidiagonalizing/F\o lner sequence is a closed $*$-subalgebra of $\cB(\cH)$, i.e., a C*-algebra (see, for example, \cite[Chapter~7]{bBrown08}).

\item[d)] Since F\o lner sequences need not be exhaustive, we recall here that in Theorem~3.2 of \cite{LledoYakubovich13} a characterization of operators having a F\o lner sequence, but not an exhaustive one, is given. Such an operator $T\in\cB(\cH)$ can be represented as an orthogonal sum $T=F\oplus \widetilde{T}$ on $\cH=\cH_0\oplus \widetilde{\cH}$, where $F$ is a finite-dimensional operator and $\widetilde{T}\in\cB(\widetilde{\cH})$ has no F\o lner sequence in a strong sense
(see \cite[Definition~3.1]{LledoYakubovich13}). This characterization can be also stated in other areas: Theorem~3.9 in \cite{ALLW-1} for algebraic F\o lner sequences or Section~4 in \cite{ALLW-2} in the context of C*-algebras.
(Note also that exhaustive sequences of finite rank projections are called \emph{proper} in the references mentioned in this item since they are associated to proper filtrations of the underlying Hilbert space.)
\end{itemize}
\end{remark}

%%%%%%%%%%%%%%%%%%%%%%%%%%%%%%%%%%%%%%%%%%%%%%%%%%%%%%%%%%%%
\subsection{Construction of quasidiagonalizing sequences for bounded normal operators}\label{subsec:berg}
As mentioned before, any bounded normal operator $N$ has a quasidiagonalizing sequence. 
There are several approaches to show this result based on a dyadic partition of a compact set containing the spectrum of the operator. We will not follow Berg's ingenious construction for 
normal operators (see \cite{berg:71}), but go along the route proposed in 
\cite[Section~2.2]{bChavan21}. We adapt their strategy to construct first an explicit quasidiagonalizing sequence for a self-adjoint operator and, then show that this sequence can be used as well for normal operators using a theorem due to Halmos \cite{Halmos72} stating that any 
normal operator is a continuous function of a self-adjoint operator.
One reason for following this strategy is that the first step will be enough in many applications, in particular, in many examples appearing in mathematical physics where one has to approximate the self-adjoint Hamiltonian of a system. In Section~\ref{subsec:berg-unbounded} we extend this construction to unbounded operators.\\[2mm]

We divide the construction of the quasidiagonal sequences into two steps:\\
\begin{itemize}
 \item[] {\em Step~1:} We first show how to construct a quasidiagonalizing sequence $\cP:=\{P_n\}_{n\in\N}$ for a self-adjoint $A\in\cB(\cH)$. Let $\{\omega_n\}_{n\in\N}$ be a basis for $\cH$ and fix $\varepsilon>0$. For each $n\in \N$, take $\{I_{n,j}\}_{j=1}^{k_n}$ to be a partition of $[-\norm{A},\norm{A}]$ into intervals of length at most $\varepsilon/2^{n}$ (for instance, choosing the dyadic refinement at each step). We define $P_1=Q_1$ as the orthogonal projection onto the finite dimensional subspace $\operatorname{span}\{E_{A}(I_{1,j})\omega_1\mid 1\leq j \leq k_1\},$ where $E_A$ denotes the spectral resolution of $A$. The rest of the projections in $\cP$ are obtained recursively. Given $P_{n-1}$, let
$A_n=P_{n-1}^\perp AP_{n-1}^\perp$ with spectral resolution $E_{A_n}$. We define $P_n=P_{n-1}+Q_n$, where $Q_n$ denotes the orthogonal projection onto $\operatorname{span}\{E_{A_n}(I_{n,j})P_{n-1}^\perp\omega_n\mid 1\leq j \leq k_n\}$. \\

 \item[] {\em Step~2:} The previous step also provides a quasidiagonalizing sequence for bounded normal operators if we use the fact that every bounded normal operator belongs to the abelian
 C*-algebra generated by some self-adjoint operator. In fact, let $N\in\cB(\cH)$ be a normal operator on a Hilbert space $\cH$ and denote by $E_N$ its spectral resolution. For each $n\in\N$, consider a Borel partition $\{\Delta_{n,j}\}_{j=1}^{k_n}$ of $\sigma(N)$ with diameter at most $\varepsilon/2^n$ and let $\{E_n\}_{n\in \N}$ be an enumeration of $\{E_N(\Delta_{n,j})\mid n\in\N, 1\leq j\leq k_n\}.$ The normal operator $N$ belongs to the C*-algebra generated by the self-adjoint operator $A_N=\sum_{n\in\N}3^{-n}((2E_n)-\mathbbm{1})$. By Remark~\ref{rem:QD/Folner_bd} c), this implies that any quasidiagonalizing sequence for  $A_N$ is also quasidiagonalizing for $N$, in particular, the one constructed in Step~1.
\end{itemize}

\begin{remark}
The sequences  $\{Q_n\}_{n\in\N}$ and $\cP$ decompose $A$ into a block-diagonal operator $\sum_{n}Q_nAQ_n$ and a compact operator $K=\sum_{n}Q_nAP_n^\perp+P_n^\perp A Q_n$ (convergence of the series in norm) with $\|K\|<\varepsilon$ (see, for example, \cite{Halmos70,bDavidson96}). 
Since every (finite rank) compression $Q_nAQ_n$ is self-adjoint, it can be diagonalized, and this procedure provides thus a proof for Weyl's Theorem (see \cite{Weyl09}). In \cite{berg:71}, Berg showed that every normal operator is also the compact perturbation of a diagonal operator. Since the eigenvectors of the diagonal operator are obtained within the proof, this approach allows to define an alternative quasidiagonalizing sequence for the normal operator by taking an exhaustive sequence of projections $P_n$ projecting onto the finite dimensional subspaces generated by the first $n$ eigenvectors. 
\end{remark}

%%%%%%%%%%%%%%%%%%%%%%%%%%%%%%%%%%%%%%%%%%%%%%

\subsection{Sparse Følner sequences}\label{sec:sparse}
The aim of this section is to introduce sparse Følner sequences and demonstrate how this concept can be applied, in particular, to quasidiagonal operators. This allows us to find sequences of coordinate projections that not only satisfy the Følner condition but are also sparse, which might be useful in areas like numerical analysis and operator approximation (see, for example, \cite{bHagen01,Herrero82}).

\smallskip

In this regard, it is worth remarking that in Fourier Analysis and Number Theory, sparse sequences (often called \emph{lacunary} when gaps grow exponentially) are integer sequences whose elements grow rapidly enough to be \emph{thin} in $\mathbb{Z}$.  
Indeed, in Fourier Analysis, lacunary sequences index Fourier series with remarkable properties: they often behave like sums of independent random variables (Salem–Zygmund Theorem) and converge almost everywhere despite large gaps. Likewise, in Number Theory, sparse sequences appear in additive combinatorics as thin sets avoiding arithmetic progressions and in studying additive bases (see \cite{Katznelson,Rudin,Nathanson} as basic references regarding their role).

\begin{definition}[Sparse F\o lner Sequence]
Let $\{e_n\}_{n\in\N}$ be an orthonormal basis of $\cH$. A sequence of non-zero, finite rank orthogonal projections $\cR:=\{R_n\}_{n\in\N}$ is called {\em sparse} if each $R_n$ is the orthogonal projection onto 
$\text{span}\{e_{k_1}, e_{k_2}, \dots, e_{k_n}\}$, where $\{k_n\}_{n\in\N}$ is a strictly increasing subsequence of $\mathbb{N}$. If, in addition, $\cR$ is a F\o lner sequence for $T\in\mathcal{B}(\mathcal{H})$ we call it a {\em sparse F\o lner sequence}.
\end{definition}

To motivate the difference between quasidiagonal and F\o lner-type approximations for sparse sequences
we consider the example of a rank~1 compact operator. Recall from 
Remark~\ref{rem:QD/Folner_bd} that any sequence $\cP:=\{P_n\}_{n\in\N}$ associated with a filtration is quasidiagonalizing for any compact operator $K\in \cK(\cH)$. By the singular value decomposition, every compact operator $K$ can be written as $K=\sum_{j=1}^\infty u_j\otimes v_j$, for some $u_j,v_j\in \cH$. Hence, the sequence of finite rank operators $\{K^{(N)}\}_{N\in \N}$ defined by $K^{(N)}=\sum_{j=1}^N u_j\otimes v_j$ converges to $K$ in operator norm, and we have
$$[K^{(N)},P_n]=\sum_{j=1}^N[u_j\otimes v_j,P_n]=\sum_{j=1}^N(u_j-P_nu_j)\otimes P_nv_j+P_nu_j\otimes(P_nv_j-v_j).$$
Since $\cP$ is exhaustive, we have $\lim_n\norm{u_j-P_nu_j}=0$ and $\lim_n\norm{v_j-P_nv_j}= 0$  for all $j$, and thus,
$$\norm{[K^{(N)},P_n]}\leq\sum_{j=1}^N\norm{u_j-P_nu_j}\norm{P_nv_j}+\norm{P_nu_j}\norm{P_nv_j-v_j}
\mathop{\rightarrow}_{n\to\infty}=0\;
$$
which implies $\lim_n\norm{[K,P_n]}= 0$.

Note that if now we consider a sparse sequence $\{R_n\}_{n\in\N}$ for some strictly increasing subsequence of $\N$ (i.e., %$R_n\not\!\overset{\text{SOT}}{\longrightarrow} \1$)
$R_n\not\to\1$ in the strong operator topology), we may have $\norm{u_j-R_nu_j}\not\to 0$ or $\norm{v_j-R_nv_j}\not\to 0$, and more generally, $\norm{[K,R_n]} \not\to 0$. However, a F\o lner-type approximation is still possible since the asymptotic expression is averaging over the ranks of the projections.  
In fact, we show in the next example that {\em any} sparse sequence of finite rank orthogonal projections is a sparse F\o lner sequence for any compact operator.

\begin{example}\label{Ex: sparse_compact} Consider a sparse sequence $\{R_n\}_{n\in\N}$ on $\cH$ and a compact operator $K\in\cK(\cH)$. Then for every $\varepsilon > 0$, there exists a finite rank operator $K_\varepsilon$ such that $\norm{K-K_\varepsilon}< \varepsilon/4$, and thus,
\[
\norm{[(K-K_\varepsilon), R_n]}_2  \le 2\norm{R_n}_2 \|K - K_\varepsilon\|\leq \varepsilon\norm{R_n}_2/2.
\]
Let now $N_\varepsilon\in \N$ be such that $\norm{R_n}_2\geq 2\norm{K_\varepsilon}_2/\varepsilon$ for every $n\geq N_\varepsilon$. In particular, this implies $\norm{R_n}_2\geq 2\norm{[K_\varepsilon,R_n]}_2/\varepsilon$. Hence,
 \[
\frac{\norm{[K, R_n]}_2}{\norm{R_n}_2} \le \frac{\norm{[K_\varepsilon, R_n]}_2}{\norm{R_n}_2} + \frac{\norm{[(K-K_\varepsilon), R_n]}_2}{\norm{R_n}_2}\leq \frac{\varepsilon}{2}+\frac{\varepsilon}{2} .
\]
\end{example}
A direct consequence of the example is the following theorem.
\begin{theorem}\label{Prop: sparse_QD}
Let $T \in \mathcal{B}(\mathcal{H})$ be a quasidiagonal operator. Then $T$ admits a sparse Følner sequence.
\end{theorem}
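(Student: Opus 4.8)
The plan is to combine Halmos' theorem (invoked in Remark~\ref{rem:QD/Folner_bd}~c)) with the content of Example~\ref{Ex: sparse_compact}. Since $T$ is quasidiagonal, it is a compact perturbation of a block-diagonal operator: $T=B+K$, where $K\in\cK(\cH)$ and $B$ is block-diagonal with respect to some filtration $\{P_n\}_{n\in\N}$, with associated mutually orthogonal blocks $Q_m=P_m-P_{m-1}$ projecting onto finite dimensional subspaces $Q_m\cH$. The orthonormal basis $\{e_n\}_{n\in\N}$ needed to define a sparse sequence should be chosen adapted to this block structure, i.e., by concatenating orthonormal bases of each block $Q_m\cH$. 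With respect to such a basis, each $P_m$ is a coordinate projection.

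The strategy is then to build a sparse sequence $\cR=\{R_n\}_{n\in\N}$ of coordinate projections (in the adapted basis) that on the one hand is a F\o lner sequence for the compact part $K$, and on the other hand makes the commutator with the block-diagonal part $B$ vanish outright. For the compact part, Example~\ref{Ex: sparse_compact} already guarantees that \emph{any} sparse sequence is a F\o lner sequence for $K$, so that piece requires no special care. The delicate point is the block-diagonal part: I would arrange the strictly increasing subsequence $\{k_n\}$ so that each $R_n$ projects onto a union of \emph{whole} blocks $Q_m\cH$, i.e., $R_n=P_{m(n)}$ for a suitable increasing sequence $m(n)$. Because $B$ commutes with each $P_m$, this forces $[B,R_n]=0$ for every $n$, and then by linearity of the commutator $\|[T,R_n]\|_1=\|[K,R_n]\|_1$, so the F\o lner quotient for $T$ equals the F\o lner quotient for $K$, which tends to $0$.

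The main obstacle — and really the only subtle verification — is reconciling the \emph{sparseness} requirement with the constraint that $R_n$ be built from whole blocks. Sparseness demands that $R_n$ be the coordinate projection onto $\{e_{k_1},\dots,e_{k_n}\}$ for a strictly increasing index set, i.e., exactly one new basis vector is added at each step $n\mapsto n+1$. But completing a block $Q_m\cH$ generally adds $\dim(Q_m\cH)>1$ vectors at once. The reconciliation is that the definition of sparseness only asks that each $R_n$ be \emph{some} coordinate projection onto an increasing chain of basis vectors; it does not demand that consecutive projections differ by rank one. Thus I would simply let $\{k_1,k_2,\dots\}$ enumerate the basis vectors block by block (all of $Q_1\cH$, then all of $Q_2\cH$, and so on), and declare $R_n$ to be the projection onto the span of the first $n$ of these; whenever $n$ happens to coincide with a partial sum $\dim(Q_1\cH)+\cdots+\dim(Q_m\cH)$, we have $R_n=P_m$ and the block-diagonal commutator vanishes. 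To handle the intermediate indices $n$, I would pass to the subsequence of those $R_n$ that complete a block (this is still a sparse sequence, being coordinate projections onto an increasing chain), thereby ensuring $[B,R_n]=0$ for all members of the sequence we retain.

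Concretely, the cleanest route is: fix the adapted basis and set $\cR$ to be the subsequence $\{P_m\}_{m\in\N}$ of block-completing coordinate projections, relabelled as a sparse sequence $\{R_n\}_{n\in\N}$. Each $R_n$ is a coordinate projection onto an initial segment of the adapted basis, hence sparse; $[B,R_n]=0$ gives $\|[T,R_n]\|_1=\|[K,R_n]\|_1$; and Example~\ref{Ex: sparse_compact} yields $\|[K,R_n]\|_1/\|R_n\|_1\to 0$ (with the trace-class norm replaced by Hilbert--Schmidt as permitted by Remark~\ref{rem:QD/Folner_bd}~b)). This establishes that $\cR$ is a sparse F\o lner sequence for $T$, completing the argument. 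The only thing to double-check is that $\{P_m\}$ is genuinely sparse, i.e., not eventually exhaustive in a trivial way; but since $B$ (and hence $T$) acts on an infinite dimensional space with infinitely many nontrivial blocks, the index set $\{k_n\}$ is a proper strictly increasing subsequence of $\N$ whenever at least one block has dimension greater than one, and in any case one may discard blocks to force genuine sparseness without affecting the two estimates above.
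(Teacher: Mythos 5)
Your proposal is correct and follows essentially the same route as the paper's proof: decompose $T=B+K$ via Halmos' theorem, build the sparse projections out of whole blocks so that $[B,R_n]=0$ and hence the F\o lner quotient for $T$ reduces to that for $K$, then invoke Example~\ref{Ex: sparse_compact}. The paper simply defines $R_n:=Q_{k_1}+\cdots+Q_{k_n}$ for an arbitrary strictly increasing subsequence $\{k_n\}$ from the outset, which is exactly where your final ``discard blocks to force genuine sparseness'' step lands, so the detour through initial-segment projections $P_{m(n)}$ is cosmetic.
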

\begin{proof}
By Remark~\ref{rem:QD/Folner_bd} a) (see also \cite[Section~4]{Halmos70}), $T$ can be written as $T=B+K$, where $B$ is a block-diagonal operator and $K$ is compact. Let $\{Q_n\}_{n\in\N}$ denote the sequence of mutually orthogonal finite rank projections that reduce $B$, and let $\{k_n\}_{n\in\N}$ be a strictly increasing subsequence of $\N$. Define $R_n:=Q_{k_1}+\dots+ Q_{k_n}$. The sequence $\{R_n\}_{n\in\N}$ is a sparse F\o lner sequence for $T$, since
\[
\frac{\|[T,R_n]\|_2}{\|R_n\|_2} = \frac{\|[K,R_n]\|_2}{\|R_n\|_2},\]
and, by Example~\ref{Ex: sparse_compact}, the right-hand side tends to 0 as $n\to \infty$.
\end{proof}

%%%%%%%%%%%%%%%%%%%%%%%%%%%%%%%%%%%%%%%%%%%%%%%%

\section{Quasidiagonality and F\o lner sequences for unbounded operators}\label{sec:unbounded}
It is natural to ask about the quasidiagonal or F\o lner type approximations in the context of unbounded linear operators on Hilbert spaces. In fact, finite element methods typically ask for a concrete finite 
dimensional approximation of elliptic operators approximating weak solutions via sufficiently regular elements with the correct boundary conditions (see, for example, \cite[Section~3]{YPP17} and references therein). Also Hamiltonians in mathematical physics are unbounded in general and require different types of finite dimensional approximations (see \cite{Ibort13,FBL24}).

To give a unified presentation of the analysis, we will focus on the case of a densely defined closable operator $T\in \cL(\cH)$ with domain $\cD(T)$ and denote it by the pair $(T,\cD(T))\in\cL(\cH)$; we will also consider an orthonormal basis $\cB:=\{\psi_n\}_{n\in\N}\subset \cD(T)\subset\cH$ 
and extended seminorms given by
\begin{equation}\label{eq:norms}
   \norm{T}_u:=\sup_{\substack{\psi\in\cD(T)\\
                     \norm{\psi}\leq 1}}
                     \norm{T\psi},\quad
   \norm{T}_{2,\cB}:=\sqrt{{\sum_{n\in\N}\norm{T\psi_n}^2}} \quadtext{and} 
   \norm{T}_{1,\cB}:=\sum_{n\in\N}
                     \left\langle
                     \left|\overline{T}\right|\psi_n,\psi_n
                     \right\rangle\;.
\end{equation}
Note that the last term in Eq.~(\ref{eq:norms}) is well-defined since 
$\cD(T)\subset\cD\left(\overline{T}\right)=\cD\left(\left|\overline{T}\right|\right)$ and that one 
can always find an orthonormal basis in $\cD(T)$ by applying the Gram-Schmidt algorithm to an infinite, linearly independent set of the domain.

\begin{definition}\label{Def:QDunbounded}
Let $(T,\cD(T))\in \cL(\cH)$ be a densely defined closable operator and $\cP=\{P_n\}_{n\in \N}$ be a sequence of projections associated with a filtration of $\cH$ satisfying the domain stability condition $P_n\cH\subseteq \cD(T)$, $n\in \N$.
We say that $\cP$ is a \textit{quasidiagonalizing sequence} for $T$ if $\;\lim\limits_{n} \norm{TP_n-P_nT}_u=0\,$; in this case we call $(T,\cD(T))$ a \textit{quasidiagonal operator}.
\end{definition}

\begin{definition}\label{Def:FOEunbounded}
Let $(T,\cD(T))\in \cL(\cH)$ be a densely defined closable operator and $\cR:=\{R_n\}_{n\in \N}$ be a sequence of non-zero finite rank orthogonal projections. The sequence $\cR$ is called 
\begin{itemize}
\item[(i)]  a \textit{1-F\o lner sequence} for $T$ if 
$\quad\lim\limits_{n} \frac{\norm{TR_n-R_nT}_{1,\cB}}{\norm{R_n}_1}=0\;$,

\item[(ii)] a \textit{2-F\o lner sequence} for $T$ if 
$\quad\lim\limits_{n} \frac{\norm{TR_n-R_nT}_{2,\cB}}{\norm{R_n}_2}=0$,
for some orthonormal basis $\cB\subset \cH$. 
\end{itemize}
\end{definition}

In the next proposition, we collect some immediate consequences of the preceding definitions. 
\begin{proposition}\label{prop:stability}
 Let $(T,\cD(T))$ be an unbounded closable operator on $\cH$, $\cP=\{P_n\}_{n\in \N}$ a sequence of projections associated with a filtration of $\cH$, $\cR=\{R_n\}_{n\in \N}$ a sequence of non-zero finite rank orthogonal projections and $a\in\{1,2\}$.
 
\begin{itemize}
\item[(i)] If $\cP$ 
is a quasidiagonalizing sequence for $T$, then it is a quasidiagonalizing sequence for any compact perturbation of $T$ with the same domain.
\item[(ii)] If $\cR$ is an $a$-F\o lner sequence for $T$, then it is an
$a$-F\o lner sequence for its closure $\overline{T}$.
\end{itemize}
\end{proposition}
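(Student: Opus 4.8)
The plan is to prove the two statements separately, both by reducing the relevant quantity to that of the original operator $T$ and then invoking the hypothesis.

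\medskip

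\textbf{Proof of (i).} First I would write the compact perturbation as $S = T + K$ with $K \in \cK(\cH)$, defined on the same domain $\cD(S) = \cD(T)$; in particular the domain stability condition $P_n\cH \subseteq \cD(T) = \cD(S)$ is inherited automatically. By linearity of the commutator,
\[
   SP_n - P_nS = (TP_n - P_nT) + (KP_n - P_nK),
\]
so by the triangle inequality for the seminorm $\norm{\cdot}_u$ it suffices to show $\norm{KP_n - P_nK}_u \to 0$. Since $K$ is bounded (compact operators are bounded), the seminorm $\norm{\cdot}_u$ restricted to such operators agrees with the usual operator norm $\norm{\cdot}$. Hence the claim reduces to the well-known bounded fact that any sequence of finite rank projections associated with a filtration is quasidiagonalizing for any compact operator, already recorded in Remark~\ref{rem:QD/Folner_bd} a). I would cite that remark directly (or reproduce the short singular-value argument from Example~\ref{Ex: sparse_compact}), and combine with $\lim_n \norm{TP_n - P_nT}_u = 0$ from the hypothesis to conclude $\lim_n \norm{SP_n - P_nS}_u = 0$.

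\medskip

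\textbf{Proof of (ii).} The key observation is that the $a$-F\o lner seminorms in Eq.~(\ref{eq:norms}) are computed entirely from the action of the operator on basis vectors, and that $T$ and $\overline{T}$ agree on the dense domain $\cD(T)$. Concretely, for each $n$ the finite rank projection $R_n$ maps $\cH$ into $\cD(\overline{T})$ only if its range lies in the domain, so I would first note that one may take the basis $\cB$ and the ranges of the $R_n$ inside $\cD(T) \subseteq \cD(\overline{T})$, on which $\overline{T}$ coincides with $T$. Then for every basis vector $\psi_k$ the quantities $\overline{T}R_n\psi_k - R_n\overline{T}\psi_k$ and $TR_n\psi_k - R_nT\psi_k$ are equal, whence
\[
   \norm{\overline{T}R_n - R_n\overline{T}}_{a,\cB} = \norm{TR_n - R_nT}_{a,\cB}, \qquad a \in \{1,2\}.
\]
Dividing by $\norm{R_n}_a$, which does not depend on the operator at all, gives that the F\o lner quotient for $\overline{T}$ equals that for $T$, so the limit being zero transfers verbatim.

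\medskip

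\textbf{Main obstacle.} The only genuinely delicate point is the interplay between domains in part (ii): the seminorms $\norm{\cdot}_{a,\cB}$ are defined through the closure $\overline{T}$ (note $\norm{T}_{1,\cB}$ already uses $|\overline{T}|$), so I must check that replacing $T$ by $\overline{T}$ does not enlarge or alter the finite section $R_nT$ versus $R_n\overline{T}$. This is where closability is used: the graph closure guarantees $\overline{T}\psi = T\psi$ for all $\psi \in \cD(T)$, and since the range of each $R_n$ (and each $\psi_k \in \cB \subset \cD(T)$) stays inside $\cD(T)$, no new values are produced. Once this domain bookkeeping is in place the computation is purely a restatement, so the substance of the proposition lies in verifying that the seminorms see only the common restriction of $T$ and $\overline{T}$ to $\cD(T)$.
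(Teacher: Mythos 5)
Your proposal is correct and follows essentially the same route as the paper: part (i) via linearity of the commutator, the triangle inequality, and the bounded fact of Remark~\ref{rem:QD/Folner_bd}~a) for the compact term, and part (ii) by observing that the seminorms only see the common restriction of $T$ and $\overline{T}$ to $\cD(T)$ (which the paper dismisses as a direct consequence of the definition). The only point the paper records that you omit is the one-line check that $T+K$ is itself closable, since $(\overline{T}+K,\cD(\overline{T}))$ is a closed extension; this is needed for $T+K$ to qualify as a quasidiagonal operator in the sense of Definition~\ref{Def:QDunbounded}.
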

\begin{proof}
To show (i), note first that the stability of the domain under the finite rank projections is immediate since $\cD(T+K)=\cD(T)$. The operator $T+K$ is closable, as it admits the closed extension $(\overline{T}+K,\cD(\overline{T}))$. Moreover, for any compact operator
$K$ on $\cH$, we have  on $\cD(T)$ 
 that $[(T+K),P_n]=[T,P_n]+[K,P_n]$ hence
\[
 \|[(T+K),P_n]\|_u \leq \|[T,P_n]\|_u+\|[K,P_n].\|
\]
Therefore, $\lim_{n\to\infty} \|[(T+K),P_n]\|_u=0$ by Remark~\ref{rem:QD/Folner_bd}~(a).\\Part (ii) is a direct consequence of the definition of F\o lner sequences for unbounded operators.
\end{proof}
We now extend Halmos' theorem on the characterization of quasidiagonal operators as compact perturbations of block-diagonal operators to the unbounded scenario (cf.~\cite{Halmos70}). For this we need to extend the notion of block-diagonal operator:

\begin{definition}\label{Def:BLOCKunbounded}
Let $(B,\cD(B))\in \cL(\cH)$ be a densely defined closable operator and $\cQ:=\{Q_n\}_{n\in \N}$ a sequence of mutually orthogonal finite rank projections 
%%associated with a filtration 
of $\cH$ such that $Q_n\cH\subseteq \cD(B)$ for all $n\in \N$. We say that $\cQ$ is a \textit{block-diagonalizing sequence} for $B$ if for all $n\in\N$, $BQ_n-Q_nB=0$ on $\cD(B)$, i.e., every projection $Q_n$ reduces $B$. In this case we say $(B,\cD(B))$ is a \textit{block-diagonal} operator.
\end{definition}

\begin{theorem}\label{Theorem:QD=BD+K}
    Let $(T,\cD(T))\in \cL(\cH)$ be a (closable) quasidiagonal operator. Then, for every $\varepsilon>0$, there exists a compact operator $K\in\cK(\cH)$ with $\norm{K}<\varepsilon$ such that $B:=T-K$ with $\cD(B)=\cD(T)$ is block-diagonal.
\end{theorem}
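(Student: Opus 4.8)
The plan is to mimic the classical Halmos argument for bounded operators, but to absorb the unboundedness into the commutator seminorm $\norm{\cdot}_u$ and the domain stability condition. Starting from a quasidiagonalizing sequence $\cP=\{P_n\}_{n\in\N}$ with $c_n:=\norm{[T,P_n]}_u\to 0$, I would first pass to a subsequence (and relabel) so that $\sum_n c_n^2$ is as small as I please; since each $[T,P_n]$, a priori defined only on $\cD(T)$, extends to a bounded operator of norm $c_n$ on all of $\cH$, this costs nothing. Putting $P_0:=0$ and $Q_n:=P_n-P_{n-1}$ produces mutually orthogonal finite rank projections with $Q_n\cH\subseteq P_n\cH\subseteq\cD(T)$ and $\sum_n Q_n=\1$ strongly.

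Next I would construct the compact corrector explicitly as the off-diagonal part
\[
 K:=\sum_{n}\big((\1-Q_n)\,T\,Q_n\big)=\sum_n\big(P_{n-1}TQ_n+P_n^\perp TQ_n\big),
\]
each summand being finite rank because $Q_n$ is. The identities $P_n^\perp TQ_n=P_n^\perp[T,P_n]Q_n$ and $P_{n-1}TQ_n=-[T,P_{n-1}]Q_n$ on $\cH$ (which follow from $P_nQ_n=Q_n$, $P_{n-1}Q_n=0$ and the definition of the commutator on the domain) give $\norm{P_n^\perp TQ_n}\le c_n$ and $\norm{P_{n-1}TQ_n}\le c_{n-1}$. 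A Cauchy--Schwarz estimate over any finite index set $S$ then yields $\norm{\sum_{n\in S}P_n^\perp TQ_n}\le(\sum_{n\in S}c_n^2)^{1/2}$, and analogously for the other sum. Hence the partial sums of $K$ are Cauchy in operator norm, so $K\in\cK(\cH)$, and choosing the subsequence with $\sum_n c_n^2<\varepsilon^2/4$ forces $\norm{K}<\varepsilon$.

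I would then define $B:=T-K$ on $\cD(B):=\cD(T)$; since $K$ is bounded and everywhere defined, $B$ is densely defined and closable (with $\overline B=\overline T-K$), and $Q_n\cH\subseteq\cD(B)$. It remains to verify that each $Q_m$ reduces $B$, i.e. $[B,Q_m]=0$ on $\cD(T)$, which is equivalent to $[T,Q_m]=[K,Q_m]$ on $\cD(T)$. From the explicit form of $K$ one computes directly $KQ_m=(\1-Q_m)TQ_m$ and, using norm-convergence of the partial sums, $Q_mK\psi=Q_mT\psi-Q_mTQ_m\psi$; subtracting these gives $[K,Q_m]\psi=TQ_m\psi-Q_mT\psi=[T,Q_m]\psi$, exactly the reduction identity.

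The one genuinely delicate point — and the place where the unbounded case departs from Halmos' — is the identity $Q_mK\psi=Q_mT\psi-Q_mTQ_m\psi$, which conceals the question of whether $Q_mTP_N\psi\to Q_mT\psi$ as $N\to\infty$ for $\psi\in\cD(T)$. A naive interchange of $T$ with the strongly convergent sum $\sum_nQ_n\psi=\psi$ is \emph{not} justified for unbounded $T$, and this is where I expect the main obstacle to lie. I would resolve it with the commutator bound itself: writing $TP_N^\perp\psi=P_N^\perp T\psi-[T,P_N]\psi$ and using $Q_mP_N^\perp=0$ for $N\ge m$, one obtains $Q_mTP_N^\perp\psi=-Q_m[T,P_N]\psi$, whose norm is at most $c_N\norm\psi\to0$. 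This is the crux; once it is in place the reduction identity, and hence block-diagonality of $B$, follow, completing the proof.
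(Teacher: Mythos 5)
Your proof is correct and follows essentially the same route as the paper: pass to a subsequence with summable commutator norms, set $Q_n:=P_n-P_{n-1}$, subtract the off-diagonal part of $T$ as a norm-convergent sum of finite rank pieces, and verify that each $Q_m$ reduces $B=T-K$ on $\cD(T)$. The only differences are organizational --- you group the corrector by columns $(\1-Q_n)TQ_n$ and bound it via Cauchy--Schwarz with $\sum_n c_n^2$ small, where the paper uses the strips $Q_{n+1}TP_n+P_nTQ_{n+1}$ estimated by a geometric series --- and your explicit verification that $Q_mTP_N\psi\to Q_mT\psi$ via $Q_mTP_N^\perp\psi=-Q_m[T,P_N]\psi$ spells out precisely the limiting step that the paper compresses into the phrase ``telescoping property.''
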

\begin{proof}
    Fix $\varepsilon>0$ and let $\cP=\{P_n\}_{n\in \N}$ be a quasidiagonalizing sequence for $T$. By dropping to a subsequence if necessary, we may assume that 
    $$\norm{TP_n-P_nT}_u<\varepsilon/2^{n+1} \quadtext{for all}n\in \N.$$ 
    Consider the sequence of mutually orthogonal finite rank projections $Q_n:=P_n-P_{n-1}$, $n\in\N$
    (putting $P_0=0$) as in Section~\ref{sec:bounded}. Define the operator
    $$K=\sum_{n\in\N}Q_{n+1} T P_n+P_n T Q_{n+1}
       =\sum_{n\in\N}P_{n+1}(TP_n-P_nT)P_n-P_n(TP_n-P_nT)P_{n+1}\;,$$
    which is indeed compact as the series converges in norm by the estimate above, and since 
    $$\norm{P_{n+1}(TP_n-P_nT)P_n-P_n(TP_n-P_nT)P_{n+1}}\leq 2\norm{TP_n-P_nT}_u,$$
    and each partial sum is finite rank.
    Moreover, we have that $\norm{K}<\varepsilon$. Observe that $B:=T-K$ with $\cD(B)=\cD(T)$ is closable, since  
    $(\overline{T}-K,\cD(\overline{T}))$ is a closed extension. 
    Using the telescoping property of the sequence $\{Q_n\}_{n\in\N}$ we btain
    $$B=T-K=\sum_{n\in\N}Q_n T Q_n,$$
    and, hence, for all $n\in\N$,  $[(T-K),Q_n]=0$ on $\cD(T)$.
\end{proof}
Conversely, it is clear that every block-diagonal operator is quasidiagonal, and by Proposition~\ref{prop:stability}, so is every compact perturbation of a block-diagonal. As a consequence of Theorem~\ref{Theorem:QD=BD+K}, we obtain the following corollary, whose proof is analogous to that of Theorem~\ref{Prop: sparse_QD}.
\begin{corollary}\label{cor:usparse}
Let $(T,\cD(T)) \in \cL(\mathcal{H})$ be a quasidiagonal operator. Then $T$ admits a sparse Følner sequence.
\end{corollary}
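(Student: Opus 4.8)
The plan is to mirror the proof of Theorem~\ref{Prop: sparse_QD}, replacing Halmos' classical decomposition with its unbounded counterpart, Theorem~\ref{Theorem:QD=BD+K}. First I would fix $\varepsilon>0$ and invoke Theorem~\ref{Theorem:QD=BD+K} to write $T=B+K$ on $\cD(T)$, where $(B,\cD(B))$ with $\cD(B)=\cD(T)$ is block-diagonal and $K\in\cK(\cH)$ with $\norm{K}<\varepsilon$. Let $\cQ=\{Q_n\}_{n\in\N}$ be the block-diagonalizing sequence for $B$, so that $BQ_n-Q_nB=0$ on $\cD(B)$ for every $n$. The idea is to assemble an orthonormal basis $\cB=\{e_m\}_{m\in\N}$ of $\cH$ by concatenating an orthonormal basis of each finite-dimensional block $Q_n\cH$; relative to this basis every finite sum of the $Q_n$'s is a coordinate projection.

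Next I would pick a strictly increasing subsequence $\{k_n\}_{n\in\N}\subset\N$ and set $R_n:=Q_{k_1}+\dots+Q_{k_n}$. By construction each $R_n$ is the orthogonal projection onto a coordinate subspace spanned by basis vectors drawn from the selected blocks, so $\{R_n\}_{n\in\N}$ is a sparse sequence. Since $Q_{k_j}\cH\subseteq\cD(B)=\cD(T)$ for each $j$, we have $R_n\cH\subseteq\cD(T)$, so the domain-stability requirement is met. Because every $Q_{k_j}$ reduces $B$, so does $R_n$, giving $[B,R_n]=0$ on $\cD(B)$; hence on $\cD(T)$
\[
 [T,R_n]=[B,R_n]+[K,R_n]=[K,R_n]\;.
\]

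The remaining point is to evaluate the F\o lner quotient. Here the unbounded seminorm collapses to the ordinary Hilbert-Schmidt norm: since $R_n$ is finite rank and $K$ is bounded, $[K,R_n]=KR_n-R_nK$ is a finite-rank bounded operator, so $\norm{[K,R_n]}_{2,\cB}$ is independent of the basis and coincides with the Hilbert-Schmidt norm $\norm{[K,R_n]}_2$. Consequently
\[
 \frac{\norm{TR_n-R_nT}_{2,\cB}}{\norm{R_n}_2}=\frac{\norm{[K,R_n]}_2}{\norm{R_n}_2}\;,
\]
and Example~\ref{Ex: sparse_compact}, applied to the compact operator $K$ and the sparse sequence $\{R_n\}_{n\in\N}$, shows that the right-hand side tends to $0$. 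Thus $\{R_n\}_{n\in\N}$ is a sparse $2$-F\o lner sequence for $T$.

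I expect the only genuine obstacle beyond the bounded case to be the bookkeeping with the unbounded operator $B$: one must check that $[B,R_n]=0$ holds as an identity of (possibly unbounded) operators on the common domain $\cD(T)$, rather than merely on a dense subspace, and that the passage from the seminorm $\norm{\cdot}_{2,\cB}$ to the Hilbert-Schmidt norm is legitimate. Both reduce to the observation that $R_n\cH\subseteq\cD(T)$ and that $[K,R_n]$ is finite rank, after which the compact-operator estimate of Example~\ref{Ex: sparse_compact} carries over unchanged.
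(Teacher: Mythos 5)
Your proof is correct and follows essentially the same route as the paper, which simply observes that the argument of Theorem~\ref{Prop: sparse_QD} carries over once Halmos' classical decomposition is replaced by Theorem~\ref{Theorem:QD=BD+K}: decompose $T=B+K$, take $R_n:=Q_{k_1}+\dots+Q_{k_n}$ from the block-diagonalizing sequence, reduce $[T,R_n]$ to $[K,R_n]$, and invoke Example~\ref{Ex: sparse_compact}. The additional checks you carry out---domain stability $R_n\cH\subseteq\cD(T)$ and the identification of the seminorm $\norm{\cdot}_{2,\cB}$ with the basis-independent Hilbert--Schmidt norm on the bounded finite-rank commutator $[K,R_n]$---are exactly the details the paper leaves implicit.
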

%%%%%%%%%%%%%%%%%%%%%%%%%%%%%%%%%%%%%%%%%%%%%%%%%%%%%%%
\subsection{Quasidiagonalizing sequences for unbounded normal operators}\label{subsec:berg-unbounded}

In this subsection, we generalize the construction in Section~\ref{subsec:berg} to unbounded self-adjoint and normal operators.

Let $(A,\cD(A))$ be an unbounded self-adjoint operator on a Hilbert space $\cH$ and take $\{I_n\}_{n\in \N}$ to be a Borel partition of $\R$ into intervals of finite length. For
every $n\in \N$, fix $\varepsilon_n=1/2^n$ and apply  \textit{Step 1} of the procedure in Section~\ref{subsec:berg} to get a quasidiagonalizing sequence $\{P^{(n)}_m\}_{m\in\N}$ for the 
compression $E_A(I_n)A=E_A(I_n)AE_A(I_n)$. 
Define $E_{k-1}=\sum_{n+m=k}P^{(n)}_m$ for each $k>1$. The sequence $\{P_n\}_{n\in \N}$, where $P_n=\sum_{i=1}^n{E_i}$, is a quasidiagonalizing sequence for $A$.

Similarly, a quasidiagonalizing sequence for an unbounded normal operator $(N,\cD(N))$ can be obtained by applying \textit{Step 2} of the procedure in Section~\ref{subsec:berg}.
As a consequence of Proposition~\ref{prop:stability}~(ii) we obtain the following result.

\begin{corollary}
 The quasidiagonalizing sequence for the self-adjoint operator $(A,\cD(A))$  constructed in this subsection is stable under compact perturbations of $A$.
\end{corollary}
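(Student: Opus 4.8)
The plan is to reduce the statement to Proposition~\ref{prop:stability}(i), which already records that a quasidiagonalizing sequence for an operator persists as a quasidiagonalizing sequence under any compact perturbation sharing the same domain. The only work is to check that the hypotheses of that proposition are met by the sequence $\{P_n\}_{n\in\N}$ produced by the construction of this subsection, and then to transcribe the commutator estimate.

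First I would recall that the construction immediately preceding the corollary establishes that $\{P_n\}_{n\in\N}$ is a quasidiagonalizing sequence for $(A,\cD(A))$ in the sense of Definition~\ref{Def:QDunbounded}: it is an exhaustive filtration of $\cH$ whose finite dimensional subspaces lie in $\cD(A)$, and it satisfies $\norm{AP_n-P_nA}_u\to 0$. Next, for any compact $K\in\cK(\cH)$, boundedness of $K$ gives $\cD(A+K)=\cD(A)$, so the domain stability condition $P_n\cH\subseteq\cD(A+K)$ is automatically preserved and the sequence remains admissible for the perturbed operator.

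The core of the argument is the splitting of the commutator on $\cD(A)$, namely $[(A+K),P_n]=[A,P_n]+[K,P_n]$, together with the triangle inequality for the seminorm $\norm{\cdot}_u$ on the common domain, which yields
\[
\norm{(A+K)P_n-P_n(A+K)}_u \le \norm{AP_n-P_nA}_u + \norm{KP_n-P_nK}.
\]
The first summand tends to $0$ because $\{P_n\}_{n\in\N}$ quasidiagonalizes $A$, while the second tends to $0$ by Remark~\ref{rem:QD/Folner_bd}(a): any exhaustive filtration asymptotically commutes in operator norm with every compact operator. Hence the left-hand side tends to $0$, and $\{P_n\}_{n\in\N}$ quasidiagonalizes $A+K$, which is exactly the asserted stability.

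I do not expect a genuine obstacle, since this is a direct application of Proposition~\ref{prop:stability}(i). The only point requiring a little care is the bookkeeping with the extended seminorm $\norm{\cdot}_u$: because $A+K$ is defined only on $\cD(A)$ whereas $K$ is globally bounded, one should observe that restricting $[K,P_n]$ to the unit ball of $\cD(A)$ can only decrease its norm, so the bounded-operator estimate $\norm{KP_n-P_nK}\to 0$ indeed controls the $\norm{\cdot}_u$-contribution of the perturbation. With this remark the argument closes precisely as in the proof of Proposition~\ref{prop:stability}(i).
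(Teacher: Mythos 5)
Your proof is correct and takes essentially the same route as the paper: the corollary is stated there as an immediate consequence of the stability proposition (in fact of part~(i) on compact perturbations, not part~(ii) as the paper's citation mistakenly says), whose proof is exactly your commutator splitting $[(A+K),P_n]=[A,P_n]+[K,P_n]$ combined with the fact that any exhaustive filtration asymptotically norm-commutes with every compact operator. Your additional remark that restricting $[K,P_n]$ to the unit ball of $\cD(A)$ only decreases the seminorm is a correct and slightly more careful rendering of the same estimate.
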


%%%%%%%%%%%%%%%%%%%%%%%%%%%%%%%%%%%%%%%%%%%%%%%%%%%%%%%%%%%%%%%%%%%
\subsection{Classes of examples}

In this section we illustrate the previous definitions with some examples and show the analytical and algebraic limitations when working with filtrations of Hilbert spaces in relation with unbounded operators. We show here, in contrast with the bounded operator scenario, that
\begin{itemize}
 \item the notions of $1$- and $2$-F\o lner sequences for a closable operator are not equivalent anymore;
 \item a quasidiagonal/F\o lner sequence for a closable operator need not satisfy this property for higher-order monomials of this operator;
\end{itemize}

We need to introduce first the key notion of the propagation of an operator 
(see, e.g., \cite[Definition~5.9.2]{nowak-yu-12}).
Let $(T,\cD(T))\in\cL(\cH)$ and choose an orthonormal basis $\cB:=\{\psi_n\}_{n\in\N}\subset \cD(T)\subset\cH$.
Then, $T\cong (T_{ij})_{i,j\in \N}$ and $\cH\cong\ell_2(\N)$. The \textit{propagation} of $T$ with respect to $\cB$ is given by
 \[
    \fp_{\cB}(T):=\sup\{ |i-j|\mid T_{ij}\not=0\}    \;.   
 \]
Some first examples of triples $\left(T,\cD(T),\{P_{n}\}_{n\in \N}\right)$ for which the extended seminorms of Eq.~(\ref{eq:norms}) take finite values for $[T,P_n]$ arise by restricting the matrix representation. For instance, let $\cB:=\{\psi_n\}_{n\in\N}\subseteq\cD(T)$ be an orthonormal basis of $\cH$, $P_n$ be the orthogonal projection onto $\operatorname{span}\{\psi_1,...,\psi_n\}$ and require $\fp_{\cB}([T,P_n])$ to be finite for all $n\in\N$. It is easy to see that this requirement is equivalent to every row and column vector of the matrix $(T_{ij})_{i,j\in \N}$ having finite support. Hence, the domain stability condition of Definitions~\ref{Def:QDunbounded} and \ref{Def:FOEunbounded} is clearly satisfied.

This family of operators inherits some key implications from the bounded setting.
\begin{proposition} Let $(T,\cD(T),\{P_n\}_{n\in\N})$ as above.  
\begin{itemize}
\item[(i)] For $m=n+\fp_\cB([T,P_n])$, we have 
$$\norm{[T,P_n]}_u=\norm{P_m[T,P_n]P_m}\quadtext{and}\norm{[T,P_n]}_{a,\cB}=\norm{P_m[T,P_n]P_m}_a
\;,\;\;a=1,2\,.$$
\item[(ii)] If $\cP_\cB$ is a quasidiagonalizing/2-F\o lner sequence for $T$, then it is a 1-F\o lner sequence for $T$ as well.
\end{itemize}
\end{proposition}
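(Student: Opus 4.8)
The plan is to exploit the finite-propagation hypothesis to localize each commutator $[T,P_n]$ inside a single finite block, after which both statements reduce to finite-dimensional bookkeeping combined with standard Schatten-norm comparisons. First I would compute the matrix of the commutator in the basis $\cB$. Writing $T_{ij}=\langle T\psi_j,\psi_i\rangle$, a direct calculation yields
\[
   ([T,P_n])_{ij}=T_{ij}\big(\1_{\{j\le n\}}-\1_{\{i\le n\}}\big),
\]
so a nonzero entry forces \emph{exactly one} of the indices $i,j$ to be $\le n$. Since every nonzero entry also satisfies $|i-j|\le \fp_\cB([T,P_n])=m-n$, the larger of the two indices is at most $n+(m-n)=m$. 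Hence all nonzero entries sit in the top-left $m\times m$ block, i.e. $[T,P_n]=P_m[T,P_n]P_m$ as operators on $\cD(T)$. (The degenerate case $[T,P_n]=0$ is trivial.) This single identity is the engine for everything that follows.

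For part (i) I would read off the three norm identities from this localization. The operator-norm (i.e. $\norm{\cdot}_u$) identity is immediate: $[T,P_n]$ is finite rank, hence bounded, and coincides on the dense domain $\cD(T)$ with the bounded operator $P_m[T,P_n]P_m$, so the supremum defining $\norm{\cdot}_u$ equals its operator norm. For $a=2$, the basis sum defining $\norm{\cdot}_{2,\cB}$ truncates at $k\le m$ (since $[T,P_n]\psi_k=0$ for $k>m$) and reproduces exactly $\norm{P_m[T,P_n]P_m}_2$. The case $a=1$ is the point requiring care: because $[T,P_n]$ vanishes on $P_m^\perp\cH$ and maps into $P_m\cH$, in the decomposition $\cH=P_m\cH\oplus P_m^\perp\cH$ it has block form $\mathrm{diag}(A,0)$ with $A=P_m[T,P_n]P_m$; consequently $\big|\overline{[T,P_n]}\big|=|[T,P_n]|=\mathrm{diag}(|A|,0)$, and the basis sum defining $\norm{\cdot}_{1,\cB}$ collapses to $\mathrm{tr}\,|A|=\norm{P_m[T,P_n]P_m}_1$.

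For part (ii) I would combine part (i) with a rank bound and elementary Schatten inequalities. Using $[T,P_n]=P_n^\perp T P_n-P_n T P_n^\perp$ (valid on $\cD(T)$ by the domain stability condition), each summand factors through the $n$-dimensional space $P_n\cH$, so $\mathrm{rank}([T,P_n])\le 2n$; also $\norm{P_n}_1=n$ and $\norm{P_n}_2=\sqrt n$. For any finite rank operator $X$ of rank $r$ one has $\norm{X}_1\le\sqrt{r}\,\norm{X}_2\le r\,\norm{X}_{\mathrm{op}}$. Applying part (i) I would then estimate, in the quasidiagonal case,
\[
   \frac{\norm{[T,P_n]}_{1,\cB}}{\norm{P_n}_1}\le\frac{2n\,\norm{[T,P_n]}_u}{n}=2\,\norm{[T,P_n]}_u\longrightarrow 0,
\]
and in the $2$-F\o lner case,
\[
   \frac{\norm{[T,P_n]}_{1,\cB}}{\norm{P_n}_1}\le\frac{\sqrt{2n}\,\norm{[T,P_n]}_{2,\cB}}{n}=\sqrt{2}\,\frac{\norm{[T,P_n]}_{2,\cB}}{\norm{P_n}_2}\longrightarrow 0,
\]
which gives the claimed $1$-F\o lner property in both hypotheses.

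The hard part will be the trace-class identity in part (i): unlike the operator and Hilbert–Schmidt cases, the $\norm{\cdot}_{1,\cB}$ seminorm is defined through $\big|\overline{T}\big|$ and the chosen basis, so I must justify carefully that the absolute value of the finite-rank commutator respects the block decomposition and that the diagonal basis sum really equals the trace of $|A|$. Everything else is routine, provided the rank bound is confirmed to scale linearly in $n$ so that it exactly cancels the normalizations $\norm{P_n}_1=n$ and $\norm{P_n}_2=\sqrt n$.
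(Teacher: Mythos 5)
Your proof is correct and follows essentially the same route as the paper's: you localize the commutator to the top-left $m\times m$ block by computing its matrix entries and combining the one-sided index condition with the propagation bound, and then deduce (ii) from the rank-$2n$ bound together with the standard Schatten inequalities $\norm{X}_1\leq\sqrt{r}\,\norm{X}_2\leq r\,\norm{X}$, exactly as in the paper. Your explicit verification of the trace-class identity in (i) (the block form $A\oplus 0$, hence $\lvert\overline{[T,P_n]}\rvert=\lvert A\rvert\oplus 0$) is in fact more detailed than the paper's, which simply asserts that the claim follows from the localization identity $[T,P_n]\psi=P_m[T,P_n]P_m\psi$ on $\cD(T)$.
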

\begin{proof}
For (i), denote by  $T\cong (T_{ij})_{i,j\in \N}$ and  $[T,P_n]\cong ([T,P_n]_{ij})_{i,j\in \N}$ the matrices with respect to $\cB$. Then,
$$[T,P_n]_{ij}=\begin{cases}
    -T_{ij}&\text{if }i\leq n<j \text{ and } |i-j|\leq\fp_\cB([T,P_n]),\\[1mm]
    T_{ij}&\text{if }j\leq n<i \text{ and } |i-j|\leq\fp_\cB([T,P_n]),\\[1mm]
    0&\text{otherwise},
\end{cases}$$
and it is clear that $[T,P_n]_{ij}=(P_m[T,P_n]P_m)_{ij}$ for $m=n+\fp_\cB([T,P_n])$. Therefore, for every $\psi\in \cD(T)$, we have $[T,P_n]\psi=P_m[T,P_n]P_m\psi$,
from which the claim follows.\\

Now, for (ii), note that $P_m[T,P_n]P_m$ has rank at most $2n$, so we obtain the estimates:
$$\norm{P_m[T,P_n]P_m}_1\leq2n\norm{P_m[T,P_n]P_m},\quadtext{and}\norm{P_m[T,P_n]P_m}_1\leq\sqrt{2n}\norm{P_m[T,P_n]P_m}_2$$
and thus,
$$\frac{\norm{[T,P_n]}_{1,\cB}}{\norm{P_n}_1}\leq 2\norm{[T,P_n]}_{u}\quadtext{and}\frac{\norm{[T,P_n]}_{1,\cB}}{\norm{P_n}_1}\leq \frac{\sqrt{2}\norm{[T,P_n]}_{2,\cB}}{\norm{P_n}_2}\;.$$
\end{proof}

Throughout this subsection, we will denote by $\cB:=\{e_n\}_{n\in \N}$ the canonical basis of $\ell^2(\N)$ and by $\cP_\cB:=\{P_n\}_{n\in\N}$  the sequence of orthogonal projections $P_n$ 
onto $\operatorname{span}\{e_1,\dots,e_n\}$. 

We start by studying examples of operators with bounded propagation.

\begin{example}\label{Ex: S_w}
    Let $(S_w,\cD(S_w))$ be the unilateral weighted shift operator acting on $\ell^2(\N)$ with weight vector $w=(w_1,w_2,\dots)$, i.e., $S_we_n=w_ne_{n+1}$ for all $n\in\N$, and domain
    \begin{equation}\label{eq:c00}
    \cD(S_w)=c_{00}:=\left\{x=\sum_{n\in \N}\alpha_ne_n\mid  \text{with finitely many} \:\alpha_n\neq 0 \right\}\subset \ell^2(\N).     
    \end{equation}
    Note that $\cD(S_w^*)$ is dense in $\ell^2(\N)$ since $\cD(S_w)\subseteq \cD(S_w^*)$ and, consequently, $S_w$ is closable.
    Its matrix representation in the basis $\cB$ is given by
$$
\begin{pmatrix}
0 & 0 & 0 & 0 &\hdots\\
w_1 & 0 & 0 & 0& \hdots\\
0 &w_2& 0 & 0 &\hdots\\
0 & 0 &w_3& 0 &\hdots\\
\vdots & \vdots & \vdots &  \vdots& \ddots\\
\end{pmatrix}\;.$$
By the definition of $P_n$ it follows that, on the domain $\cD(S_\omega)$, we have
$[S_w,P_n]=w_n E_{n+1,n}$, where $E_{n+1,n}$ is the unit matrix having all entries $0$ except a $1$ in the matrix element $(n+1,n)$. In this case the propagations of $S_w$ and its $P_n$-commutator coincide; namely, $\fp_{\cB}(S_w)=\fp_{\cB}([S_w,P_n])=1$ for all $n\in \N$.\\

Depending on the weight vector $w$, the sequence $\cP_\cB$, may or may not be a quasidiagonalizing or an  $a$-F\o lner sequence for $a=1,2$.
\begin{itemize}
\item $\cP_\cB$ is not a quasidiagonalizing sequence for $S_w$ with weight $w:=(\log 1,\log 2,\log 3,\dots)$, where $\log$ denotes the natural logarithm, but is both a $1$-F\o lner and a $2$-F\o lner sequence for it.

\item $\cP_\cB$ is neither a quasidiagonalizing sequence nor a 2-F\o lner sequence for $S_w$ with weight $w:=(1,\sqrt{2},\sqrt{3},\dots)$ but is a $1$-F\o lner sequence for it.

\item $\cP_\cB$ is neither a quasidiagonalizing sequence nor a 1-F\o lner/2-F\o lner sequence for $S_w$ with weight $w:=(1,2,3,\dots)$.

\end{itemize}
Indeed,  
$$\lim_n\norm{[S_w,P_n]}_u\geq
  \lim_n\norm{[S_w,P_n]e_{n}}=\lim_n\abs{w_n}=\infty,
$$
for the three weight vectors, and
$$\quad\lim_n \frac{\norm{[S_w,P_n]}_{a,\cB}}{\norm{P_n}_a}
=\lim_n\frac{\norm{P_{n+1}[S_w,P_n]P_{n+1}}_a}{\norm{P_n}_a}
= \lim_{n} \frac{\abs{w_n}}{n^{1/a}}
=\begin{cases}
\hspace{0.3cm}0 \quad\text{if}\quad a=1,2& \text{if}\quad w_n=\log n,\\[2mm]
\begin{cases}
            0 &\text{if}\quad a=1,\\
            1 &\text{if}\quad a=2\\ 
\end{cases}   &\text{if} \quad w_n=\sqrt{n},\\%[2mm]

\begin{cases}
            1 &\text{if}\quad a=1,\\
            \infty &\text{if}\quad a=2\\
\end{cases}   &\text{if} \quad w_n=n.
\end{cases}
$$
\end{example}

\begin{remark}\label{Remark: differences_bd}\begin{itemize}
    \item[a)] As noted in Remark~\ref{rem:QD/Folner_bd}~b), the F\o lner condition in Definition~\ref{def:foelner} can be equivalently formulated using the $\norm{\cdot}_2$-norm (cf. \cite[Lemma~1]{Bedos97}). This equivalence fails for unbounded operators: a 1-F\o lner sequence need not be a 2-F\o lner sequence. 
\item[b)] If $T\in\cB(\cH)$ is a bounded operator with finite propagation $\fp_\cB(T)<\infty$, then $\cP_{\cB}=\{P_n\}_{n\in\N}$ always satisfies the F\o lner condition. Indeed, the norms $\norm{[T,P_n]}_2$ are uniformly bounded:
$$\norm{[T,P_n]}_2=\sqrt{\sum_{k\in N}\norm{[T,P_n]e_k}^2}\leq2\fp_\cB([T,P_n])\norm{[T,P_n]}\leq4\fp_\cB(T)\norm{T}.$$
 In contrast, if $(T,\cD(T))\in\cL(\cH)$ is an unbounded operator with $\fp_\cB(T)<\infty$, the extended seminorms $\norm{[T,P_n]}_{1,\cB}$ and $\norm{[T,P_n]}_{2,\cB}$ are not necessarily bounded and $\cP_\cB$ may not be a F\o lner sequence.
    \end{itemize}
\end{remark}

\begin{example}\label{Ex: S^2_w}
Let $(S^2_w,\cD(S^2_w))$ be the square of the operator $S_w$ from Example~\ref{Ex: S_w} with domain $\cD(S^2_w)=c_{00}$ as in Eq.~(\ref{eq:c00}). Like $S_w$, $S^2_w$ is closable since $\cD(S^2_w)\subseteq \cD((S^2_w)^*)$. 
With respect to the basis $\cB$, the matrix representation of $S^2_w$ is given by
$$
\begin{pmatrix}
0 & 0 & 0 &\hdots\\
0 & 0 & 0 &\hdots\\
w_1w_2 & 0  & 0& \hdots\\
0 &w_2w_3& 0 &\hdots\\
0 & 0 &w_3w_4 &\hdots\\
\vdots & \vdots &  \vdots& \ddots\\
\end{pmatrix}\;,
$$
and, on the domain $\cD(S_\omega)$, we can write the commutator in terms of the unit matrix $[S_w^2,P_n]=w_n w_{n+1} E_{n+2,n}$ and, therefore,
$\fp_{\cB}(S^2_w)=\fp_{\cB}([S^2_w,P_n])=2$ for all $n\in \N$.

In this case $\cP_\cB$ is neither a quasidiagonalizing sequence nor a $a$-F\o lner sequence, $a=1,2$ for $S^2_w$ with weight vector $w:=(1,\sqrt{2},\sqrt{3},\dots)$.
In fact, we have
$$
\lim_n\norm{[S^2_w,P_n]}_u\geq\lim_n\norm{[S^2_w,P_n]e_{n}}
=\lim_n\abs{w_nw_{n+1}}=\infty,
$$
and
$$
\quad\lim_n \frac{\norm{[S_w^2,P_n]}_{a,\cB}}{\norm{P_n}_a}
   \geq\lim_n\frac{n}{n^{1/a}}= \begin{cases}
            1 &\text{if}\quad a=1,\\
            \infty &\text{if}\quad a=2\,.\\
\end{cases}
$$
\begin{remark}
 In contrast to Remark~\ref{rem:QD/Folner_bd} c), this example shows that 2-F\o lner sequences lack stability under the product. The set of (possibly) unbounded operators sharing a 2-F\o lner sequence is a linear subspace but not an algebra. The difference is that for unbounded operators, the right term of the inequality $\norm{[ST,P_n]}_{2,\cB}\leq \norm{S}_u\norm{[T,P_n]}_{2,\cB}+\norm{[S,P_n]}_{2,\cB}\norm{T}_u$ can be infinite even if the norms of the commutators are arbitrarily small. The same happens for quasidiagonalizing sequences, when applying the submultiplicativity of $\norm{\cdot}_u$. This is shown in the next example. 
\end{remark}

\begin{example} Let $(A,\cD(A))$ be an operator acting on $\ell^2(\N)$ by $Ae_{2j-1}=(2j-1)^2e_{2j-1}+1/(2j-1)e_{2j}$ and $Ae_{2j}=(2j)^2e_{2j}$ for $j\geq1$, with domain $\cD(A)=c_{00}$. Let also $(A^2,\cD(A^2))$, with domain $\cD(A^2)=\cD(A)$. The associated matrices are given by
\[
A = \begin{pmatrix}
1^2 & 0 & 0 & 0 & 0 & 0 & \dots \\
\frac{1}{1} & 2^2 & 0 & 0 & 0 & 0 & \dots \\
0 & 0 & 3^2 & 0 & 0 & 0 & \dots \\
0 & 0 & \frac{1}{3} & 4^2 & 0 & 0 & \dots \\
0 & 0 & 0 & 0 & 5^2 & 0 & \dots \\
0 & 0 & 0 & 0 & \frac{1}{5} & 6^2 & \dots \\
\vdots & \vdots & \vdots & \vdots & \vdots & \vdots & \ddots
\end{pmatrix}
\hspace{0.5cm},\hspace{0.5cm}
A^2 = \begin{pmatrix}
1^4 & 0 & 0 & 0 & 0 & 0 & \dots \\
\frac{1^2+2^2}{1} & 2^4 & 0 & 0 & 0 & 0 & \dots \\
0 & 0 & 3^4 & 0 & 0 & 0 & \dots \\
0 & 0 &\frac{3^2+4^2}{3} & 4^4 & 0 & 0 & \dots \\
0 & 0 & 0 & 0 & 5^4 & 0 & \dots \\
0 & 0 & 0 & 0 &\frac{5^2+6^2}{5} & 6^4 & \dots \\
\vdots & \vdots & \vdots & \vdots & \vdots & \vdots & \ddots
\end{pmatrix}
\;.
\]
Note that both operators are closable since $c_{00}\subset \cD(A^*)$ and $c_{00}\subset \cD((A^2)^*)$.

Finally, we show that the sequence $\cP_{\cB}$ associated with the canonical filtration is a quasidiagonal sequence for $A$ but not for $A^2$. In fact, for every $j\geq 1$, on the one hand we have 
$$
\|[A,P_{2j-1}]\|_u=
\norm{[A,P_{2j-1}]e_{2j-1}}
      =1/(2j-1) \overset{j\to \infty}{\longrightarrow} 0\;,\quadtext{and}\|[A,P_{2j}]\|_u=0,
$$
and, on the other hand, the commutators associated with $A^2$ satisfy
$$
\|[A^2,P_{2j-1}]\|_u
%=\norm{[A^2,P_{2j-1}]e_{2j-1}}
      =\bigg\lVert\frac{(2j-1)^2+(2j)^2}{2j-1}\bigg\rVert
       \overset{j\to \infty}{\longrightarrow} \infty.
$$
\end{example}
\end{example}

We conclude the section by considering an example of an operator with unbounded propagation. 

\begin{example}
Let $(S_+,\cD(S_+))$, with $\cD(S_+)=c_{00}$ as in Eq.~(\ref{eq:c00}), be the operator acting on $\ell_2(\N)$ by
$$S_+e_n=\sqrt{n}e_{2n} \quadtext{for all} n\in\N.$$
Since  $\cD(S_+)\subseteq \cD(S_+^*)$, it follows that $S_+$ is closable and its matrix representation with respect to the basis $\cB$ is

$$
\left(
\begin{array}{cccccc}
0 & 0 & 0 & 0 & 0&\hdots\\
1 & 0 & 0 & 0 & 0& \hdots\\
0 & 0& 0 & 0 & 0 &\hdots\\
0 & \sqrt{2} & 0& 0 & 0 &\hdots\\
0 & 0 & 0 &0 & 0 &\hdots\\
\vdots & \vdots & \vdots & \vdots & \vdots& \ddots\\
\end{array}\right).$$
In this example, we have $\fp_\cB(S_+)=+\infty$, but since on $\cD(S_+)$ we have
$[S_w,P_n]=\sqrt{n} E_{2n,n}$, where $E_{2n,n}$ denotes a unit matrix, we conclude that
$\fp_{\cB}([S_+,P_n])<+\infty$ for all $n\in\N$. Clearly, $\cP_{\cB}$ is not a quasidiagonalizing sequence for $S_+$ because, as before,
$$
\|[S_+,P_n]\|_u\geq
\norm{[S_+,P_n]e_{n}}
      =\norm{\sqrt{n}e_{2n}}
      =\sqrt{n} \overset{n\to \infty}{\longrightarrow} \infty\;.
$$
Moreover, $\cP_{\cB}$  is neither a 2-F\o lner sequence nor a 1-F\o lner sequence. Indeed, we have that
\begin{align*}
    \frac{\norm{[S_+,P_n]}_{a,\cB}}{\norm{P_n}_a}&=\frac{\norm{[S_+,P_n]P_{2n}}_a}{\norm{P_n}_a}
    %\frac{\left(\sum_{i=1}^{\lceil \frac{n}{2}\rceil}\left(\lceil\frac{n+1}{2}\rceil+i\right)^{a/2}\right)^{1/a}}{n^{1/a}}
    \geq \frac{\lceil\frac{n}{2}\rceil^{1/a}\left(\frac{n+1}{2}\right)^{1/2}}{n^{1/a}}\geq \frac{\left(n+1\right)^{1/2}}{2^{1/a-1/2}}
    \end{align*}
    leading to 
    $$
    \lim_n \frac{\norm{[S_+,P_n]}_{a,\cB}}{\norm{P_n}_a}=\infty\;,
    \quadtext{for} a=1,2\;.
    $$
\end{example}

%%%%%%%%%%%%%%%%%%%%%%%%%%%%%%%%%%%%%%%%%%%%%%%%%%%%%%%%%%%%%%%%%%%%%%%%

\section{Quasidiagonality and F\o lner sequences in Quantum Mechanics}

In this section we apply the methods developed before to some relevant quantum mechanical structures. In the next subsection we apply an algebraic version of F\o lner approximations to the Weyl algebra and then consider a Hilbert space representation of this algebra, necessarily in terms of unbounded operators.
In this way we are able to relate algebraic F\o lner subspaces with F\o lner sequences of projections.

%%%%%%%%%%%%%%%%%%%%%%%%%%%%%%%%%%%%%%%%%%%%%%%%%
\subsection{Algebraic amenability in quantum mechanics}\label{sec:qm}
As mentioned in the introduction, F\o lner sequences appeared naturally when extending amenability notions beyond group theory. In particular, an algebraic version of amenability was introduced by Gromov in Section~1.11 of \cite{Gromov99}. 
We will restrict here to countable algebras over the complex numbers, but this notion can be presented in full generality
(see also \cite[Section~3]{ALLW-1} for a thorough analysis of this concept and additional references).

Algebraic amenability is usually introduced via the existence of finite dimensional subspaces of the algebra 
satisfying a F\o lner-type condition which we express locally.

\begin{definition}
Let $\fA$ be a countable algebra over $\C$.
\begin{enumerate}
  \item Let $\cF\subset\fA$ be a finite subset and $\varepsilon \geq 0$. A non-zero 
  finite dimensional linear subspace $V \subseteq\fA$ is said to be a 
  \textit{($\cF, \varepsilon$)-F\o lner subspace} if it satisfies
         \begin{equation}\label{eq:algebraic_amenability}
         \frac{\dim(aV+V)}{\dim(V)}\leq 1+\varepsilon\;\;,\quad a\in\cF\; .
         \end{equation}
\item $\fA$ is said to be \textit{algebraically amenable} if for every $\varepsilon >0$ and every finite set $\cF\subset\fA$, there exists a left ($\cF, \varepsilon$)-F\o lner subspace.
\end{enumerate}
\end{definition}

As a first algebraic motivation for the analytical considerations in relation to unbounded operators in Section~\ref{sec:unbounded} we consider here Weyl algebras
which appear as primitive quotients of the enveloping algebra of a nilpotent Lie algebra 
over $\C$ (see, e.g., \cite{dixmier68}). 
For the purpose of this article it is enough to focus on the simplest case with two generators (similar results extend straightforwardly to the case of $2n$ generators).
The Weyl algebra $\cW$ is generated by the symbols $q,p$ satisfying the canonical commutation relation:
\[ 
 \cW:=\C\langle p,q \mid qp-pq=i\1 \rangle\;.
\]
It is easy to see that this algebra is simple and admits a basis of monomials where the $q$'s are written to the right of the $p$'s specifying a normal form for elements in $\cW$.
We show next that the Weyl algebra is also algebraically amenable:

\begin{proposition}\label{Prop:C_algebraic_amenable} The algebra $\cW:=\C\langle p,q \mid qp-pq=i\1 \rangle$ is algebraically amenable.

\end{proposition}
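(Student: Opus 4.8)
The plan is to exhibit, for every $\varepsilon>0$ and every finite set $\cF\subset\cW$, an explicit $(\cF,\varepsilon)$-F\o lner subspace built from the monomial basis. Recall that $\cW$ has the basis $\{p^iq^j\mid i,j\geq 0\}$ in normal form (all $p$'s to the left of all $q$'s). For $N\in\N$ I would take the candidate subspace
\[
 V_N:=\operatorname{span}\{p^iq^j\mid 0\leq i,j\leq N\}\;,
\]
so that $\dim(V_N)=(N+1)^2$. The key structural fact to exploit is that both generators act on this basis with \emph{bounded degree shift}: multiplying $p^iq^j$ on the left by $q$ or by $p$ and re-expressing the result in normal form only changes the bidegree $(i,j)$ by a bounded amount. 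Concretely, $q\cdot p^iq^j$ reduces to normal form using $qp=pq-i\1$, which lowers the $p$-degree by at most one and raises the $q$-degree by one; similarly $p\cdot p^iq^j=p^{i+1}q^j$ raises the $p$-degree by one. Thus each generator maps $V_N$ into $V_{N+1}$, and more generally any fixed element $a\in\cW$ of bidegree $(d_1,d_2)$ maps $V_N$ into $\operatorname{span}\{p^iq^j\mid 0\leq i\leq N+d_1,\,0\leq j\leq N+d_2\}$.

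The core computation is then a counting estimate. First I would reduce to the generators: since the F\o lner condition \eqref{eq:algebraic_amenability} is required only for $a$ in a finite set $\cF$, it suffices to bound $\dim(aV_N+V_N)$ for the finitely many $a\in\cF$, and for each such $a$ the image lands in a fixed bounded enlargement of the index box. The crucial inequality is
\[
 \dim(aV_N+V_N)\leq (N+1+d_1)(N+1+d_2)\;,
\]
where $d_1,d_2$ depend only on $a$. Dividing by $\dim(V_N)=(N+1)^2$ gives a ratio of the form $(1+d_1/(N+1))(1+d_2/(N+1))$, which tends to $1$ as $N\to\infty$. Taking $d:=\max_{a\in\cF}\max(d_1(a),d_2(a))$, one sees that for $N$ large enough (depending on $\varepsilon$ and $\cF$) the ratio is below $1+\varepsilon$ for every $a\in\cF$ simultaneously, which produces the desired left $(\cF,\varepsilon)$-F\o lner subspace.

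The main obstacle I anticipate is not the asymptotic counting, which is elementary, but making the bounded-shift claim precise and uniform. One must verify carefully that reducing an arbitrary product $a\cdot p^iq^j$ to normal form via repeated application of $qp=pq-i\1$ never increases the bidegree beyond $(i+d_1,j+d_2)$ with $d_1,d_2$ determined solely by the (finite) normal-form expansion of $a$ — in particular that the lower-order correction terms generated by the commutator relation do not escape the enlarged box. This is a degree bookkeeping argument: each commutation $qp\mapsto pq-i\1$ either preserves the total degree or strictly decreases it, so the top bidegree is controlled and all correction terms have smaller (hence still admissible) bidegree. Once this uniform degree bound is established, the proposition follows immediately from the counting estimate above. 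I would therefore organize the write-up as: (1) fix the normal-form basis and define $V_N$; (2) prove the degree-shift lemma for the generators and extend it to arbitrary $a\in\cF$ by linearity and the product structure; (3) perform the dimension count and conclude by choosing $N$ large.
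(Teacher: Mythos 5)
Your proposal is correct and takes essentially the same route as the paper: the paper uses total-degree triangles $V_n=\operatorname{span}_\C\{p^kq^l : k+l\leq n\}$ of dimension $(n+1)(n+2)/2$ where you use bidegree boxes of dimension $(N+1)^2$, but both arguments rest on the same key observation — normal ordering via the commutation relation never increases (bi)degree, so left multiplication by a fixed element shifts the index set by a bounded amount — followed by the same elementary dimension count as $N\to\infty$. (One immaterial typo: the relation $qp-pq=i\1$ gives $qp=pq+i\1$, not $qp=pq-i\1$; the correction term is a lower-degree monomial either way, so your bookkeeping is unaffected.)
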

\begin{proof}
Since any element in $\cW$ can be written as a finite linear combination of monomials expressed in normal form, it is enough to show algebraic amenability for any finite set $\cM$ of monomials
of the form $p^kq^l$. Now given $\varepsilon > 0$ and $\cM$ we define for $n\in\N_0:=\{0,1,\dots\}$ the subspace
\begin{equation}\label{eq:vn}
 V_n:=\mathrm{span}_\C\left\{p^kq^l \mid k,l \in  \N_0, \; k+l\leq n\right\}  
\end{equation}
which has dimension ${(n+1)(n+2)}/{2}$. Moreover, since
$\deg(q^lp^k)=\deg(p^kq^l)$, $k,l\in \N_0$, we have that 
$p^kq^lV_n\subset V_{n+k+l}$ and thus $\dim(p^kq^l V_n+V_n)\leq \dim(V_{n+k+l})$. 
Finally, to show that $V_n$ satisfies the F\o lner condition given in Eq.~(\ref{eq:algebraic_amenability}) let $\delta:=\max\{\deg(M)\mid M\in \cM\}$. For 
any $M\in\cM$, we have that
$$
\frac{\dim(MV_n+V_n)}{\dim(V_n)}
\leq \frac{\dim(V_{n+\delta})}{\dim(V_n)}=\frac{(n+\delta+1)(n+\delta+2)}{(n+1)(n+2)}
\leq\left(1+\frac{\delta+2}{n}\right)^2
\leq (1+\varepsilon)\,,
$$
where the last inequality holds if $n\geq (\delta+2)/(\sqrt{1+\varepsilon}-1)$.
\end{proof}

We consider next *-representations of *-algebras in terms of unbounded operators. It is well-known that the canonical commutation relation $qp-pq=i\1$ cannot be represented by bounded operators (see, e.g., \cite{dixmier58,Schmuedgen20}). However, we recall in this section that it does allow a distinguished representation (called Schr\"odinger 
representation) in terms of essentially self-adjoint operators. 
Motivated by the algebraic amenability of $\cW$, we provide a natural generalization of the quasidiagonal and F\o lner type approximations in the context of unbounded operators.

We start by revisiting the notion of representation of a *-algebra by unbounded operators (see, e.g.,  \cite[Chapter~8]{Schmuedgen20}).
\begin{definition}
 Let $\mathfrak{A}$
 be a $*$-algebra with involution $a \mapsto a^\dagger$. Denote by $\cD$ a dense subspace of $\cH$ and by $\cL(\cD)\subset\cL(\cH)$ the algebra of linear (possibly unbounded) closable operators of $\cD$ into itself. An algebra homomorphism $\pi\colon\fA\to \cL(\cD)$ is said to be a \textit{{$*$-representation of $\fA$ on $\cD(\pi):=\cD$}} if for each $a\in \fA$ we have that $\pi(a^\dag)$ is the restriction to $\cD$ of the adjoint operator $\pi(a)^*$, that is,
 \begin{equation}\label{eq:*-representation conditon}
   \langle\pi(a) \varphi,\psi\rangle=\langle\varphi,\pi(a^\dag)\psi\rangle \quad\text{for all}\quad \varphi, \psi \in \cD\;.
\end{equation}
\end{definition}

This definition can be applied to the Weyl algebra, which can be turned into an involutive algebra (denoted again by $\cW$) by fixing the involution on the generators via $p^*:=p$ and $q^*:=q$. This algebra admits a well-known (unbounded) $*$-representation $\pi_S$ on $\cS(\R)\subset L^2(\R)$ in terms of the usual position and momentum operators, called the Schr\"odinger representation of the Weyl algebra (cf. \cite[Example~2.5.2 and 8.3.7]{Schmuedgen20}): taking the Schwartz functions as the common domain 
$\cD(\pi_S):=\cS(\R)$ and defining for any $\psi\in\cS(\R)$
$$
(\pi_S(q)\psi)(x):=(Q\psi)(x)=x\psi(x) \text{and} \quad (\pi_S(p)\psi)(x)=(P\psi)(x)=-i\psi'(x)\;.
$$
Since the Schr\"odinger representation $\pi_S$ is injective, it preserves linear dimension, and the *-algebra of operators $\pi_S(\cW)$ is also algebraically amenable, with amenability implemented by a
sequence of finite dimensional subspaces given by $\{\pi_S(V_n)\}_{n\in\N}$, where $V_n$ is defined in Eq.~(\ref{eq:vn})).
In order to understand the analytical implications and limitations of quasidiagonal and F\o lner type approximations for operators in relation to algebraic amenability, we now generalize the corresponding notions given in Definition~\ref{def:foelner} to the unbounded scenario.

There is also a natural relation between algebraic amenability and concrete operator algebras 
having a F\o lner sequence. Consider first the bounded case and let $\fA\subset\cB(\cH)$ be a concrete separable C*-algebra 
%%satisfying $\fA\fAp\cK(\cH)=\{0\}$ 
and $\fA_0\subset\fA$ a dense *-subalgebra with a cyclic vector $\psi_0$. 
If $\fA_0$ is algebraically amenable, then there is a F\o lner sequence for all operators in $\fA$ (see \cite[Theorem~3.17]{ALLW-2} for a precise and more general statement). 
Concretely, there is a natural linear map between the algebra and the Hilbert space given by
\begin{equation*}
  \Phi\colon \fA_0  \to \cH\quadtext{with} \Phi(a) =  a\psi_0\;.
\end{equation*}

In the unbounded scenario, this map also allows to relate subspaces of an *-algebra $\fA$
with orthogonal projections on a Hilbert space. In the notation above let $V\subset \fA$ be a (finite dimensional) F\o lner subspace of an involutive algebra, $(\pi\colon\fA\to L(D),\cD\subset\cH)$ 
a *-representation with cyclic vector $\psi_0\in\cD$ and denote by
\begin{equation}\label{eq:map}
\Phi(V) \quadtext{the orthogonal projection onto the finite-dimensional subspace} 
\mathrm\pi(V)\psi_0 \subset\cH\;.
\end{equation}
We will show in the next section that, in contrast with the bounded case, the projection $P$ associated to an algebraic F\o lner subspace will not be a F\o lner projection for general *-representations.

%%%%%%%%%%%%%%%%%%%%%%%%%%%%%%%%%%%%%%%%%%%%%%%%%%%%%%%%%%%%%%%%%%%%%%%%%%%%%%%%%%%%
\subsection{Examples in the Schr\"odinger representation of the Weyl algebra}
We conclude this section making contact with the notion of algebraic amenability of the Weyl algebra in its Schr\"odinger representation. The sequence of F\o lner subspaces presented in Section~\ref{sec:qm} will determine a canonical sequence of finite rank projections on $L^2(\R)$.

Solving the stationary Schr\"odinger equation for the quantum harmonic oscillator with mass $m$ and frequency $\omega$ (and taking $\hbar=1$), we get the following eigenvectors, which form an orthonormal basis of $L^2(\R)$:
\begin{equation}   \label{eq:eigenfunctions}
    \psi_n(x)=\frac{1}{(\sqrt{\pi}\: x_0\:2^n\:n!)^{1/2}}H_n \left(\frac{x}{x_0} \right)
                  \exp\left\{-\frac{1}{2}\left(\frac{x}{x_0}\right)^2\right\}\;,\quad n\in\N_0,
\end{equation}
where  $H_n:=(-1)^ne^{x^2}\frac{d^n}{dx^n}e^{-x^2}$ is the $n^{\text{th}}$-order Hermite polynomial (with $H_0:=\mathbbm{1}$) and  $x_0:=\sqrt{\hbar/m\omega}$ (see \cite[Theorem~11.4]{bHall13}).

Given the orthonormal basis 
$\cB:=\{\psi_n\}_{n\in\N_0}\subset\cS(\R)\subset L^2(\R)\cong\ell_2(\N_0)$ 
the operators $\pi_S(q)$ and $\pi_S(p)$ have the following matrix expressions:
\begin{equation}\label{eq:Def_mat_QP}
    \pi_S(q)=\frac{1}{\sqrt{2}}
    \begin{pmatrix}
      0 & 1 & 0 & 0 & \ldots\\
      1 & 0 & \sqrt{2} & 0 & \ldots\\
      0 & \sqrt{2} & 0 & \sqrt{3} & \ldots \\
      0 & 0 & \sqrt{3} & 0  &\ldots \\
      \vdots & \vdots & \vdots & \vdots & \ddots \\
    \end{pmatrix},
    \quadtext{and}
    \pi_S(p)=\frac{i}{\sqrt{2}}
    \begin{pmatrix}
      0 & -1 & 0 & 0 & \ldots\\
      1 & 0 & -\sqrt{2} & 0 & \ldots\\
      0 & \sqrt{2} & 0 & -\sqrt{3} & \ldots \\
      0 & 0 & \sqrt{3} & 0  &\ldots \\
      \vdots & \vdots & \vdots & \vdots & \ddots \\
    \end{pmatrix}.\hspace{-0.2cm}
\end{equation}
It is often convenient to express them in terms of the creation and annihilation operators  $a^*$ and $a$, respectively, defined as weighted shifts by
\begin{equation}\label{eq:S_w_defintion}
a^*\psi_n=\sqrt{n+1}\psi_{n+1} \quad \text{for all} \quad n\in \N_0, \quad\ \text{and} \quad a\psi_n=\begin{cases}
    \sqrt{n}\psi_{n-1} &n\in \N\\
    0& n=0\;,
\end{cases}
\end{equation}
with domains $\cD(a^*)=\cD(a)=\cS(\R)$. Namely, we have
\begin{equation} \label{eq:Q,P_linearcombinaiton_Sw}
    \pi_S(q)=\frac{1}{\sqrt{2}}(a^*+a)  \quad \text{and} \quad\pi_S(p)=\frac{i}{\sqrt{2}}\left(a^*-a\right),
\end{equation}
and therefore, $\pi_S(\cW)=\C\langle a,a^*\rangle$ with the commutation relation $[a,a^*]=\1$.

Since $\psi_0$ is a cyclic vector for $\pi_S(\cW)$ there is a map $\Phi$ (defined in Eq.~\ref{eq:map}) 
from the lattice of subspaces of $\cW$ to the lattice of projections on the Hilbert space $\cH=L^2(\R)$. In the concrete case of the F\o lner subspaces $\{V_n\}_{n\in \N}$ defined in Eq.~(\ref{eq:vn}) we consider first their representation as 
\[
 \pi_S(V_n):=\mathrm{span}_\C\left\{a^k(a^*)^l \mid k,l \in  \N_0, \; k+l\leq n\right\}\;
\]
and corresponding finite rank orthogonal projection is given by 
\[
 \Phi(V_n)=P_{n+1}\;,\quadtext{where} P_{n+1} 
 \quadtext{is the orthogonal projection onto the subspace} \pi_S(V_n)\psi_0\subset L^2(\R)\;.
\]
Note that, by definition of the creation and annihilation operators, $\{P_{n}\}_{n\in\N_0}$ is associated to the canonical filtration $\{\operatorname{span}\{\psi_0,...,\psi_{n-1}\}\}_{n\in\N_0}$.
As the sequence of projections $\cP_\cB=\{P_n\}_{n\in\N}$ encodes the algebraic amenability of 
$\cW$, it seems a natural candidate as a F\o lner sequence for (every element in) $\pi_S(\cW)$. However, algebraic F\o lner sequences need not produce F\o lner sequences of projections for unbounded operators, as we show in the next result.
\begin{theorem}\label{Theorem: FolnerSequence_PQ}
Let $\cP_\cB=\{P_n\}_{n\in\N}$ be the sequence of finite rank projections associated to the canonical filtration defined before. Then
\begin{enumerate} 
    \item $\cP_{\cB}$ is not a quasidiagonalizing sequence for $\pi_S(q)$ nor for $\pi_S(p)$;
    \item $\cP_\cB$ is a 1-F\o lner sequence for $\pi_S(q)$ and $\pi_S(p)$ but not a 2-F\o lner sequence for any of them;
    \item $\cP_\cB$ is neither a 1-F\o lner sequence nor a 2-F\o lner sequence for $\pi_S(p^2)$ nor for $\pi_S(q^2)$.
\end{enumerate}
\end{theorem}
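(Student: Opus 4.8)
The plan is to reduce every assertion to an explicit computation of the finite-rank commutators $[\pi_S(q),P_n]$, $[\pi_S(p),P_n]$, $[\pi_S(q^2),P_n]$ and $[\pi_S(p^2),P_n]$, exploiting that all four operators have finite propagation with respect to $\cB$. From $\pi_S(q)=\tfrac{1}{\sqrt 2}(a^*+a)$ and $\pi_S(p)=\tfrac{i}{\sqrt 2}(a^*-a)$ one reads that the matrices of $\pi_S(q),\pi_S(p)$ are tridiagonal (so $\fp_\cB=1$), while expanding $(a^*\pm a)^2$ shows those of $\pi_S(q^2),\pi_S(p^2)$ are pentadiagonal (so $\fp_\cB=2$). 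By the finite-propagation Proposition of Section~\ref{sec:unbounded}, for $m=n+\fp_\cB([T,P_n])$ the extended seminorms coincide with honest Schatten norms of the truncation $P_m[T,P_n]P_m$, so each quantity can be read off a small finite matrix. I will also use throughout that $\norm{P_n}_1=n$ and $\norm{P_n}_2=\sqrt n$, that $\norm{\cdot}_{1,\cB}$ of a finite-rank operator is its sum of singular values, and that $\norm{\cdot}_{2,\cB}$ is its Hilbert--Schmidt norm.

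For (1) and (2), a direct calculation on $c_{00}$ shows that the only surviving boundary terms give $[\pi_S(q),P_n]=\tfrac{\sqrt n}{\sqrt 2}\,(E_{n,n-1}-E_{n-1,n})$, an antisymmetric rank-two operator whose two singular values both equal $\sqrt{n/2}$, and identically for $\pi_S(p)$ up to phases. Hence $\norm{[\pi_S(q),P_n]}_u=\sqrt{n/2}\to\infty$, which yields (1). Moreover $\norm{[\pi_S(q),P_n]}_{1,\cB}=\sqrt{2n}$ and $\norm{[\pi_S(q),P_n]}_{2,\cB}=\sqrt n$, so the $1$-F\o lner ratio is $\sqrt{2n}/n\to 0$ while the $2$-F\o lner ratio is $\sqrt n/\sqrt n=1\not\to 0$; this proves (2), with $p$ handled verbatim.

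For (3), the extra unit of propagation is exactly the point: the off-diagonal weights of $\pi_S(q^2)$ and $\pi_S(p^2)$ grow like $n$ rather than $\sqrt n$. Isolating the pentadiagonal entries that straddle the index $n$ produces a rank-four commutator consisting of two antisymmetric $2\times 2$ blocks: one coupling $\psi_{n-2}$ and $\psi_n$ with weight $\tfrac12\sqrt{(n-1)n}$, the other coupling $\psi_{n-1}$ and $\psi_{n+1}$ with weight $\tfrac12\sqrt{n(n+1)}$. Summing singular values and squares respectively gives $\norm{[\pi_S(q^2),P_n]}_{1,\cB}=\sqrt{(n-1)n}+\sqrt{n(n+1)}$ and $\norm{[\pi_S(q^2),P_n]}_{2,\cB}=n$. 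Dividing by $\norm{P_n}_1=n$ yields a ratio tending to $2\neq 0$, and dividing by $\norm{P_n}_2=\sqrt n$ yields $\sqrt n\to\infty$; since $\pi_S(p^2)$ differs only by signs and has the same norms, this establishes (3).

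The computations are routine once organized, so the main effort is careful bookkeeping. The genuine subtleties are, first, correctly identifying which pentadiagonal entries of $\pi_S(q^2),\pi_S(p^2)$ cross the projection boundary at index $n$ --- there are precisely two couplings, so one must retain both the $(n-2,n)$ and the $(n-1,n+1)$ bands --- and, second, extracting the trace-class norm from the antisymmetric block structure rather than conflating it with the easier Hilbert--Schmidt norm. The conceptual message to emphasize is that the growth rate of the off-diagonal weights ($\sqrt n$ for $q,p$ against $n$ for $q^2,p^2$) interacts differently with the two normalizations $\norm{P_n}_1=n$ and $\norm{P_n}_2=\sqrt n$; this is precisely what separates the $1$- and $2$-F\o lner conclusions and exhibits the failure, in the unbounded setting, of the trace-class/Hilbert--Schmidt equivalence recorded in Remark~\ref{Remark: differences_bd}.
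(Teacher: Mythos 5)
Your proposal is correct and follows essentially the same route as the paper: reduce everything to the creation/annihilation operators $a,a^*$, exploit the finite propagation so that only the boundary bands of the commutators survive (including the cancellation of the diagonal part $a^*a+aa^*$ in $\pi_S(q^2),\pi_S(p^2)$), and compare the weight growth $\sqrt n$ versus $n$ against the normalizations $\norm{P_n}_1=n$, $\norm{P_n}_2=\sqrt n$. The only difference is presentational: where the paper cites Example~\ref{Ex: S_w} and the propagation proposition to get estimates, you compute the rank-two and rank-four commutators and their singular values exactly, which yields the same conclusions with explicit constants.
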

\begin{proof}
Note first that the stability of domains is satisfied since $P_n\cH\subset \cS(\R)$ for all $n\in\N$.

(1) As the position and momentum operators are represented in terms of weighted shifts we have
        $$\norm{[\pi_S(q),P_n]\psi_n}=\norm{[\pi_S(p),P_n]\psi_n}=\sqrt{n/2}$$ 
        and, thus,  $\cP_{\cB}$ is not a quasidiagonalizing sequence for $\pi_S(q)$ nor for $\pi_S(p)$.
        
(2) From Eq.~(\ref{eq:Q,P_linearcombinaiton_Sw}) we consider first the following estimates
        for the commutators in $1$- and $2$-norms:
        \begin{align*}
        \norm{[\pi_S(p),P_n]}_{1,\cB}\:&\leq \frac{1}{\sqrt{2}}\left(\norm{[a^*,P_n]}_{1,\cB}+\norm{[a,P_n]}_{1,\cB}\right),
        \quadtext{(same estimate for} \norm{[\pi_S(q),P_n]}_{1,\cB})\;,\\
        \norm{[\pi_S(q),P_n]}_{2,\cB}&=\norm{[\pi_S(p),P_n]}_{2,\cB}=\frac{1}{\sqrt{2}}\sqrt{\left\Vert[a^*,P_n]\right\Vert_{2,\cB}^2+\big\Vert[a,P_n]\big\Vert_{2,\cB}^2}
        \;.
        \end{align*}
        Note now that $a^*$ acts on the basis $\{\psi_n\}_{n\in\N_0}$ just as the weighted sift $S_w$ from Example~\ref{Ex: S_w} with unbounded weight $w=(1,\sqrt{2},\sqrt{3},\dots)$. 
        Since the conclusions of Example~\ref{Ex: S_w} also apply to $S_w^*$, it follows that $\cP_\cB$ is a $1$-F\o lner sequence for both $a$ and $a^*$ but is not a $2$-F\o lner sequence for either of them. This completes the proof for (2).
                
(3) For this part we use that $a^*a+aa^*$ is diagonal in the basis $\cB$ so that we can simplify the corresponding commutators by
     $$\norm{[\pi(q^2),P_n]}_{a,\cB}=\norm{[\pi(p^2),P_n]}_{a,\cB}=\frac{1}{2}\left\|[(a^*)^2+a^2,P_n]\right\|_{a,\cB}\;.$$
Using that the propagation of the commutator is given by $\fp_\cB([(a^*)^2+a^2,P_n])=2$, we obtain
    $$\frac{\left\|[(a^*)^2+a^2,P_n]\right\|_{a,\cB}}{\norm{P_n}_{a,\cB}}=\frac{\left\|[(a^*)^2+a^2,P_n]P_{n+2}\right\|_{a}}{\norm{P_n}_{a,\cB}}\geq n^{1-1/a}
        =\begin{cases}
            1 &\text{if}\quad a=1,\\
            \infty &\text{if}\quad a=2\\
        \end{cases}$$
and the F\o lner condition is not satisfied in either case.    
\end{proof}

\begin{remark}
The preceding result shows that the canonical sequence of projections specified in Eq.~(\ref{eq:map}) associated with the canonical F\o lner subspaces of the Weyl algebra is not quasidiagonalizing for either $\pi_S(q)$ or $\pi_S(p)$. But since these operators are essentially self-adjoint their closures {\em do have a different quasidiagonalizing sequence} based on the procedure explained in Subsection~\ref{subsec:berg-unbounded} that uses the spectral projections on a dyadic partition of the spectrum.
\end{remark}

%%%%%%%%%%%%%%%%%%%%%%%%%%%%%%%%%%%%%%%%%%%%%%%%%%%%%%%%%%%%%%%%%%%%%%%%%%%%%%%%%%%

\end{document}